\documentclass[final,subeqn]{siamltex}
\usepackage{amsfonts,amssymb,calc,epsfig,mathrsfs,multicol,multirow}

\newcommand{\E}{{\tiny E}}
\newcommand{\FMM}{\mathrm{FMM}}
\newcommand{\IMQ}{\mathrm{IMQ}}
\newcommand{\MATLAB}{{\sc Matlab}}
\newcommand{\MQ}{\mathrm{MQ}}
\newcommand{\TPS}{\mathrm{TPS}}
\newcommand{\argmin}{\mathop{\arg\,\min}}
\newcommand{\bigO}{\mathcal{O}}
\newcommand{\child}{\mathop{\mathrm{child}}}
\newcommand{\cm}{\mathrm{cm}}
\newcommand{\col}{\mathrm{c}}
\newcommand{\etal}{et al.~}
\newcommand{\iter}{\mathrm{iter}}
\newcommand{\mach}{\mathrm{mach}}
\newcommand{\qr}{\mathrm{qr}}
\newcommand{\range}{\mathop{\mathrm{range}}}
\newcommand{\row}{\mathrm{r}}
\newcommand{\sv}{\mathrm{sv}}
\newcommand{\trans}{\mathsf{T}}

\pdfimageresolution 300

\title{A fast semi-direct least squares algorithm for hierarchically block separable matrices\thanks{This work was supported in part by the National Science Foundation under awards DGE-0333389 and DMS-1203554, by the U.S.\ Department of Energy under contract DEFG0288ER25053, and by the Air Force Office of Scientific Research under NSSEFF Program Award FA9550-10-1-0180.}}
\author{Kenneth L. Ho\thanks{Courant Institute of Mathematical Sciences and Program in Computational Biology, New York University, New York, NY. Present address: Department of Mathematics, Stanford University, Stanford, CA ({\tt klho@stanford.edu}).} \and Leslie Greengard\thanks{Courant Institute of Mathematical Sciences, New York University, New York, NY ({\tt greengard@courant.nyu.edu}).}}

\begin{document}

 \maketitle

 \begin{abstract}
  We present a fast algorithm for linear least squares problems governed by hierarchically block separable (HBS) matrices.  Such matrices are generally dense but {\em data-sparse} and can describe many important operators including those derived from asymptotically smooth radial kernels that are not too oscillatory. The algorithm is based on a recursive skeletonization procedure that exposes this sparsity and solves the dense least squares problem as a larger, equality-constrained, sparse one. It relies on a sparse QR factorization coupled with iterative weighted least squares methods. In essence, our scheme consists of a direct component, comprised of matrix compression and factorization, followed by an iterative component to enforce certain equality constraints. At most two iterations are typically required for problems that are not too ill-conditioned. For an $M \times N$ HBS matrix with $M \geq N$ having bounded off-diagonal block rank, the algorithm has optimal $\bigO (M + N)$ complexity. If the rank increases with the spatial dimension as is common for operators that are singular at the origin, then this becomes $\bigO (M + N)$ in 1D, $\bigO (M +  N^{3/2})$ in 2D, and $\bigO (M + N^{2})$ in 3D. We illustrate the performance of the method on both over- and underdetermined systems in a variety of settings, with an emphasis on radial basis function approximation and efficient updating and downdating.
 \end{abstract}

 \begin{keywords}
  fast algorithms, matrix compression, recursive skeletonization, sparse QR decomposition, weighted least squares, deferred correction, radial basis functions, updating/downdating
 \end{keywords}

 \begin{AMS}
  65F05, 65F20, 65F50, 65Y15
 \end{AMS}

 \pagestyle{myheadings}
 \thispagestyle{plain}
 \markboth{K.\ L.\ HO AND L.\ GREENGARD}{FAST LEAST SQUARES FOR HIERARCHICAL MATRICES}

 \section{Introduction}
 The method of least squares is a powerful technique for the approximate solution of overdetermined systems and is often used for data fitting and statistical inference in applied science and engineering. In this paper, we will primarily consider the linear least squares problem
 \begin{equation}
  \min_{x} \| Ax - b \|,
  \label{eqn:overdetermined}
 \end{equation}
 where $A \in \mathbb{C}^{M \times N}$ is dense and full-rank with $M \geq N$, $x \in \mathbb{C}^{N}$, $b \in \mathbb{C}^{M}$, and $\| \cdot \|$ is the Euclidean norm. Formally, the solution is given by
 \begin{equation}
  x = A^{+} b,
  \label{eqn:solution}
 \end{equation}
 where $A^{+}$ is the Moore-Penrose pseudoinverse of $A$, and can be computed directly via the QR decomposition at a cost of $\bigO (M N^{2})$ operations \cite{bjorck:1996:siam,lawson:1974:prentice-hall}. This can be prohibitive when $M$ and $N$ are large. If $A$ is structured so as to support fast multiplication, then iterative methods such as LSQR \cite{paige:1982:acm-trans-math-softw} or GMRES \cite{hayami:2010:siam-j-matrix-anal-appl,saad:1986:siam-j-sci-stat-comput} present an attractive alternative. However, such solvers still have several key disadvantages when compared with their direct counterparts:
 \begin{romannum}
  \item
   The convergence rate of an iterative solver can depend strongly on the conditioning of the system matrix, which, for least squares problems, can sometimes be very poor. In such cases, the number of iterations required, and hence the computational cost, can be far greater than expected (if the solver succeeds at all). Direct methods, by contrast, are robust in that their performance does not degrade with conditioning. Thus, they are often preferred in situations where reliability is critical.
  \item
   Standard iterative schemes are inefficient for multiple right-hand sides. With direct solvers, on the other hand, following an expensive initial factorization, the subsequent cost for each solve is typically much lower (e.g., only $\bigO (MN)$ work to apply the pseudoinverse given precomputed QR factors). This is especially important in the context of updating and downdating as the least squares problem is modified by adding or deleting data, which can be viewed as low-rank updates of the original system matrix.
 \end{romannum}

 In this paper, we present a fast semi-direct least squares solver for a class of structured dense matrices called hierarchically block separable (HBS) matrices. Such matrices were introduced by Gillman, Young, and Martinsson \cite{gillman:2012:front-math-china} and possess a nested low-rank property that enables highly efficient data-sparse representations. The HBS matrix structure is closely related to that of $\mathscr{H}$- and $\mathscr{H}^{2}$-matrices \cite{hackbusch:1999:computing,hackbusch:2002:computing,hackbusch:2000:computing,hackbusch:2000:lect-appl-math} and hierarchically semiseparable (HSS) matrices \cite{chandrasekaran:2006a:siam-j-matrix-anal-appl,chandrasekaran:2006b:siam-j-matrix-anal-appl,xia:2010:numer-linear-algebra-appl}, and can be considered a generalization of the matrix features utilized by multilevel summation algorithms like the fast multipole method (FMM) \cite{greengard:1987:j-comput-phys,greengard:1997:acta-numer}. Many linear operators are of HBS form, notably integral transforms with asymptotically smooth radial kernels. This includes those based on the Green's functions of non-oscillatory elliptic partial differential equations \cite{borm:2007:linear-algebra-appl}. Some examples are shown in Table \ref{tab:examples}; we highlight, in particular, the Green's functions
 $$
  \phi_{\Delta} \left( r \right) = \frac{1}{r}, \quad \phi_{\Delta^{2}} \left( r \right) = r,
 $$
 for the Laplace and biharmonic equations, respectively, in 3D, and their regularizations, the inverse multiquadric and multiquadric kernels
 $$
  \phi_{\IMQ} \left( r \right) = \frac{1}{\sqrt{r^{2} + c^{2}}}, \quad \phi_{\MQ} \left( r \right) = \sqrt{r^{2} + c^{2}},
 $$
 respectively (for $c$ not too large).
 \begin{table}
  \caption{Examples of radial kernels $\phi (r)$ admitting HBS representations: $H_{0}^{(1)}$, zeroth order Hankel function of the first kind; $K_{0}$, zeroth order modified Bessel function of the second kind.}
  \label{tab:examples}
  \begin{center}
   \footnotesize
   \begin{tabular}{llccl}
    \hline
    Type & Name & \multicolumn{2}{c}{Kernel} & Notes\\
    \hline
    & & 2D & 3D\\
    \multirow{4}{*}{Green's function} & Laplace & $\log r$ & $1/r$\\
    & Helmholtz & $H_{0}^{(1)} (kr)$ & $e^{\imath kr} / r$ & $k$ not too large\\
    & Yukawa & $K_{0} (kr)$ & $e^{-kr} / r$\\
    & Polyharmonic & $r^{2n} \log r$ & $r^{2n - 1}$ & $n = 1, 2, 3, \dots$\\
    \hline
    \multirow{2}{*}{Radial basis function} & Multiquadric & \multicolumn{2}{c}{$\sqrt{r^{2} + c^{2}}$} & \multirow{2}{*}{$c$ not too large}\\
    & Inverse multiquadric & \multicolumn{2}{c}{$1 / \sqrt{r^{2} + c^{2}}$}\\
    \hline
   \end{tabular}
  \end{center}
 \end{table}
 The latter are well-known within the radial basis function (RBF) community and have been used to successfully model smooth surfaces \cite{carr:2001:proc-28th-annu-conf-computer-graphics-interact-technique,hardy:1971:j-geophys-res}. Also of note is the 2D biharmonic Green's function, the so-called thin plate spline
 \begin{equation}
  \phi_{\TPS} \left( r \right) = r^{2} \log r,
  \label{eqn:thin-plate-spline}
 \end{equation}
 which minimizes a physical bending energy \cite{duchon:1977:lect-notes-math}. For an overview of RBFs, see \cite{buhmann:2003:cambridge,powell:1987:algorithm-approx}.

 {\em Remark}. Although we focus in this paper on dense matrices, many sparse matrices, e.g., those resulting from local finite difference-type discretizations, are also of HBS form.

 Previous work on HBS matrices exploited their structure to build fast direct solvers for the square $M = N$ case \cite{gillman:2012:front-math-china,ho:2012:siam-j-sci-comput,martinsson:2005:j-comput-phys} (similar methods are available for other structured formats). Here, we extend the approach of \cite{ho:2012:siam-j-sci-comput} to the rectangular $M \geq N$ case. Our algorithm relies on the multilevel compression and sparsity-revealing embedding of \cite{ho:2012:siam-j-sci-comput}, and recasts the (unconstrained) dense least squares problem (\ref{eqn:overdetermined}) as a larger, equality-constrained, sparse one. This is solved via a sparse QR factorization coupled with iterative weighted least squares methods. For the former, we use the SuiteSparseQR package \cite{davis:2011:acm-trans-math-softw} by Davis, while for the latter, we employ the iteration of Barlow and Vemulapati \cite{barlow:1992:siam-j-numer-anal}, which has been shown to require at most two steps for problems that are not too ill-conditioned. Thus, our solver is a semi-direct method where the iteration often converges extremely quickly; in such cases, it retains all of the advantages of traditional direct solvers.

 It is useful to divide our algorithm into two phases: a direct precomputation phase, comprising matrix compression and factorization, followed by an iterative solution phase using the precomputed QR factors. Clearly, for a given matrix, only the solution phase must be executed for each additional right-hand side. Table \ref{tab:complexity} lists asymptotic complexities for both phases when applied to the operators in Table \ref{tab:examples} on data embedded in a $d$-dimensional domain for $d = 1$, $2$, or $3$.
 \begin{table}
  \caption{Asymptotic complexities for the least squares solver when applied to the operators in Table \ref{tab:examples} on data embedded in a $d$-dimensional domain: $M$ and $N$, matrix dimensions; $M \geq N$.}
  \label{tab:complexity}
  \begin{center}
   \footnotesize
   \begin{tabular}{cll}
    \hline
    $d$ & Precomputation & Solution\\
    \hline
    $1$ & $\bigO (M + N)$ & $\bigO (M + N)$\\
    $2$ & $\bigO (M + N^{3/2})$ & $\bigO (M + N \log N)$\\
    $3$ & $\bigO (M + N^{2})$ & $\bigO (M + N^{4/3})$\\
    \hline
   \end{tabular}
  \end{center}
 \end{table}
 Although the estimates generally worsen as $d$ increases, the solver achieves optimal $\bigO (M + N)$ complexity for both phases in {\em any} dimension in the special case that the source (column) and target (row) data are separated (i.e., the domain and range of the continuous operator are disjoint). This may have applications, for example, in partial charge fitting in computational chemistry \cite{bayly:1993:j-phys-chem,francl:1996:j-comput-chem}.

 {\em Remark}. The increase in cost with $d$ is due to the singular nature of the kernels in Table \ref{tab:examples} at the origin, which leads to growth of the off-diagonal block ranks defining the HBS form (section \ref{sec:complexity}). If the data are separated or if the kernel itself is smooth, then this rank growth does not occur. In this paper, we will not specifically address this latter setting, viewing it instead as a special case of the 1D problem.

 Our methods can also generalize to underdetermined systems ($M < N$) when seeking the minimum-norm solution in $L^{2}$, i.e., the equality-constrained least squares problem
 \begin{equation}
  \min_{Ax = b} \| x \|,
  \label{eqn:underdetermined}
 \end{equation}
 provided that the solution, which is also given by (\ref{eqn:solution}), is not too ill-conditioned with respect to $A$.

 Fast direct least squares algorithms have been developed in other structured matrix contexts as well, in particular within the $\mathscr{H}$- and HSS matrix frameworks using various structured orthogonal transformation schemes \cite{benner:2010:computing,chandrasekaran:2005:siam-j-matrix-anal-appl,dewilde:2006:oper-theor-adv-appl}. Our approach, however, is quite different and explicitly leverages the sparse representation of HBS matrices and the associated sparse matrix technology (e.g., the state-of-the-art software package SuiteSparseQR). This has the possible advantage of producing an algorithm that is easier to implement, extend, and optimize. For example, although we consider here only the standard Moore-Penrose systems (\ref{eqn:overdetermined}) and (\ref{eqn:underdetermined}), it is immediate that our techniques can be applied to general equality-constrained least squares problems with any combination of the system and constraint matrices being HBS (but with a possible increase in cost). For related work on other structured matrices including those of Toeplitz, Hankel, and Cauchy type, see, for instance, \cite{gu:1998:siam-j-matrix-anal-appl,kailath:1999:siam,van-barel:2003:linear-algebra-appl,xia:2012:siam-j-matrix-anal-appl} and references therein.

 The remainder of this paper is organized as follows. In the next two sections, we collect and review certain mathematical preliminaries on HBS matrices (section \ref{sec:hierarchical}) and equality-constrained least squares problems (section \ref{sec:equality-least-squares}). In section \ref{sec:algorithm}, we describe our fast semi-direct algorithm for both over- and underdetermined systems. Complexity estimates are given in section \ref{sec:complexity}, while section \ref{sec:updating} discusses efficient updating and downdating in the context of our solver. Numerical results for a variety of radial kernels are reported in section \ref{sec:results}. Finally, in section \ref{sec:conclusion}, we summarize our findings and end with some generalizations and concluding remarks.

 \section{HBS matrices}
 \label{sec:hierarchical}
 In this section, we define the HBS matrix property and discuss algorithms to compress such matrices and to sparsify linear systems governed by them. We will mainly follow the treatment of \cite{ho:2012:siam-j-sci-comput}, extended to rectangular matrices in the natural way.

 Let $A \in \mathbb{C}^{M \times N}$ be a matrix viewed with $p \times p$ blocks, with the $i$th row and column blocks having dimensions $m_{i}, n_{i} > 0$, respectively, for $i = 1, \dots, p$.

 \begin{definition}[block separable matrix \cite{gillman:2012:front-math-china}]
 The matrix $A$ is {\em block separable} if each off-diagonal submatrix $A_{ij}$ can be decomposed as the product of three low-rank matrices:
  \begin{equation}
   A_{ij} = L_{i} S_{ij} R_{j} \quad \mbox{for $i \neq j$},
   \label{eqn:block-separable}
  \end{equation}
  where $L_{i} \in \mathbb{C}^{m_{i} \times k^{\row}_{i}}$, $S_{ij} \in \mathbb{C}^{k^{\row}_{i} \times k^{\col}_{j}}$, and $R_{j} \in \mathbb{C}^{k^{\col}_{j} \times n_{j}}$, with (ideally) $k^{\row}_{i} \ll m_{i}$ and $k^{\col}_{j} \ll n_{j}$, for $i, j = 1, \dots, p$.
 \end{definition}

 Clearly, the block separability condition (\ref{eqn:block-separable}) is equivalent to requiring that the off-diagonal block rows and columns have low rank (Fig.\ \ref{fig:rank-structure}).
 \begin{figure}
  \centering
  \includegraphics{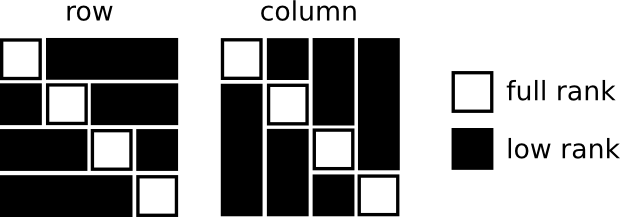}
  \caption{A block separable matrix has low-rank off-diagonal block rows and columns (black); its diagonal blocks (white) can in general be full-rank.}
  \label{fig:rank-structure}
 \end{figure}
 Observe that if $A$ is block separable, then it can be written as
 \begin{equation}
  A = D + LSR,
  \label{eqn:one-level-representation}
 \end{equation}
 where $D = \diag (A_{ii})$, $L = \diag (L_{i})$, $R = \diag (R_{i})$, and $S = (S_{ij})$ is dense with $S_{ii} = 0$.

 Let us now define a tree structure on the row and column indices $I = \{ 1, \dots, M \}$ and $J = \{ 1, \dots, N \}$, respectively, as follows. Associate with the root of the tree the entire index sets $I$ and $J$. If a given subdivision criterion is satisfied (e.g., based on the sizes $|I|$ and $|J|$), partition the root node into a set of children, each associated with a subset of $I$ and $J$ such that they together span the whole sets. Repeat this process for each new node to be subdivided, partitioning its row and column indices among its children. In other words, if we label each tree node with an integer $i$ and denote its row and column index sets by $I_{i}$ and $J_{i}$, respectively, then
 $$
  I_{i} = \bigcup_{j \in \child (i)} I_{j}, \quad J_{i} = \bigcup_{j \in \child (i)} J_{j},
 $$
 where $\child (i)$ gives the set of node indices belonging to the children of node $i$. Furthermore, we also label the levels of the tree, starting with level $0$ for the root at the coarsest level to level $\lambda$ for the leaves at the finest level; see Fig.\ \ref{fig:tree} for an example.
 \begin{figure}
  \centering
  \includegraphics{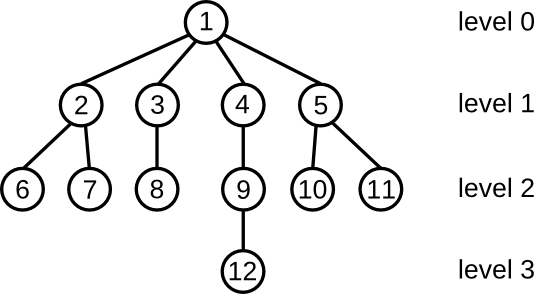}
  \caption{An example of a tree on the row and column index sets with depth $\lambda = 3$. The root (node $1$) contains all indices, which are hierarchically partitioned among its children.}
  \label{fig:tree}
 \end{figure}

 {\em Remark}. Although we require that the number of row and column partitions be the same, we do not impose that $|I_{i}| = |J_{i}|$ for any node $i$. Indeed, it is possible for one of these sets to be empty.

 Evidently, the tree defines a hierarchy among row and column index sets, each level of which specifies a block partition of the matrix $A$.

 \begin{definition}[HBS matrix \cite{gillman:2012:front-math-china}]
  The matrix $A$ is {\em HBS} if it is block separable at each level of the tree hierarchy.
 \end{definition}

 HBS matrices arise in many applications, for example, when discretizing the kernels in Table \ref{tab:examples} (up to a specified numerical precision), with row and column indices partitioned according to an octree-type ordering on the corresponding data \cite{greengard:1987:j-comput-phys,greengard:1997:acta-numer,hackbusch:2002:computing,ho:2012:siam-j-sci-comput}, which recursively groups together points that are geometrically collocated \cite{samet:1984:acm-comput-surv}.

 \subsection{Multilevel matrix compression}
 \label{sec:matrix-compression}
 We now review algorithms \cite{gillman:2012:front-math-china,ho:2012:siam-j-sci-comput,martinsson:2005:j-comput-phys} for computing the low-rank matrices in (\ref{eqn:block-separable}) characterizing the HBS form. Our primary tool for this task is the interpolative decompositon (ID) \cite{cheng:2005:siam-j-sci-comput}.

 \begin{definition}[ID]
  An {\em ID} of a matrix $A \in \mathbb{C}^{m \times n}$ with rank $k$ is a factorization $A = BP$, where $B \in \mathbb{C}^{m \times k}$ consists of a subset of the columns of $A$ and $P \in \mathbb{C}^{k \times n}$ contains the $k \times k$ identity matrix. We call $B$ and $P$ the {\em skeleton} and {\em interpolation matrices}, respectively.
 \end{definition}

 As stated, the ID clearly compresses the column space of $A$, but we can just as well compress the row space by applying the ID to $A^{\trans}$. Efficient algorithms for adaptively computing an ID to any specified precision are available \cite{cheng:2005:siam-j-sci-comput,liberty:2007:proc-natl-acad-sci-usa,woolfe:2008:appl-comput-harmon-anal}, i.e., the required rank $k$ is an {\em output} of the ID.

 \begin{definition}[row and column skeletons]
  The row indices corresponding to the retained rows in the ID are called the {\em row} or {\em incoming skeletons}; the column indices corresponding to the retained columns are called the {\em column} or {\em outgoing skeletons}.
 \end{definition}

 A multilevel algorithm for the compression of HBS matrices then follows. For simplicity, we describe the procedure for matrices with a uniform tree depth (i.e., all leaves are at level $\lambda$), with the understanding that it extends easily to the adaptive case. The following scheme \cite{gillman:2012:front-math-china,ho:2012:siam-j-sci-comput,martinsson:2005:j-comput-phys} is known as recursive skeletonization (Fig.\ \ref{fig:compression}):
 \begin{figure}
  \centering
  \includegraphics{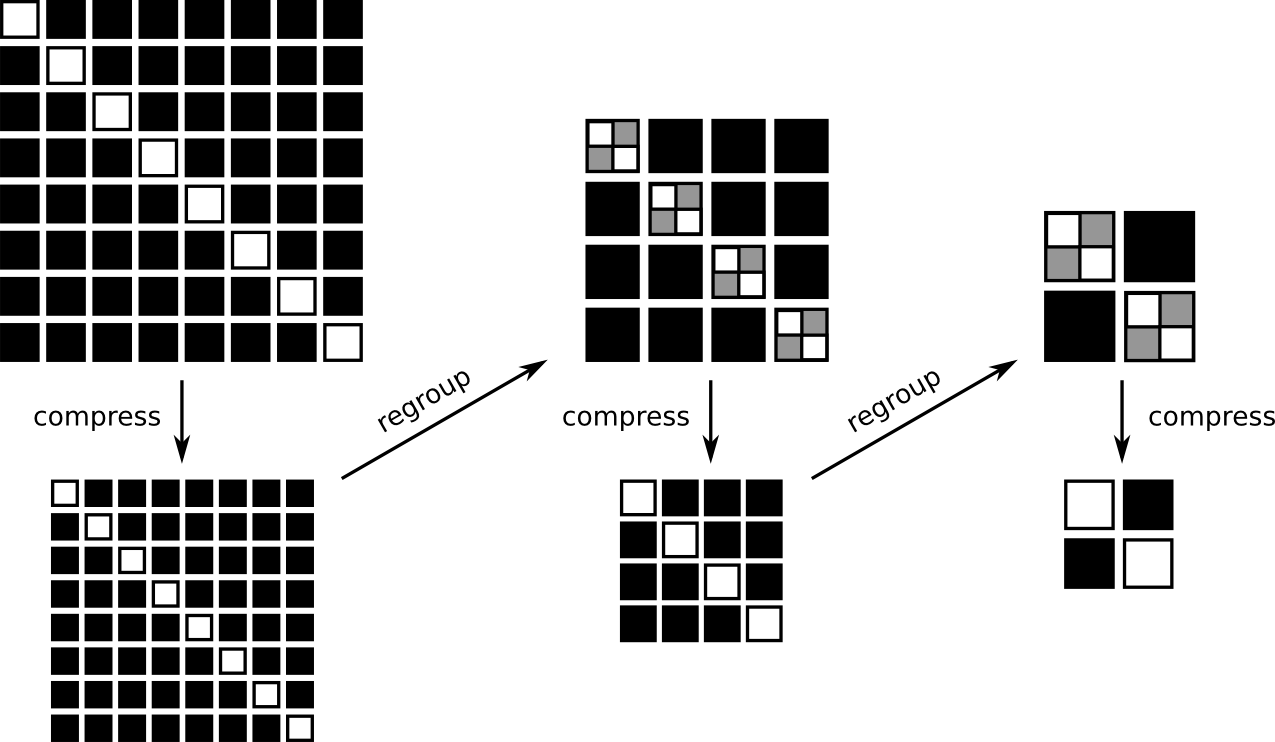}
  \caption{Schematic of recursive skeletonization, comprising alternating steps of compression and regrouping via ascension of the index tree. The diagonal blocks (white and gray) are extracted at each level; they are shown here only to illustrate the regrouping process.}
  \label{fig:compression}
 \end{figure}
 \begin{remunerate}
  \item
   Starting at the leaves of the tree, extract the diagonal blocks and compress the off-diagonal block rows and columns using the ID to a specified precision $\epsilon > 0$ as follows. For each block $i = 1, \dots, p$, compress the row space of the $i$th off-diagonal block row and call $L_{i}$ the corresponding row interpolation matrix. Similarly, for each block $j$, compress the column space of the $j$th off-diagonal block column and call $R_{j}$ the corresponding column interpolation matrix. Let $S$ be the ``skeleton'' submatrix of $A$, with each off-diagional block $S_{ij}$ for $i \neq j$ defined by the row and column skeletons associated with $L_{i}$ and $R_{j}$, respectively.
  \item
   Since the off-diagonal blocks $S_{ij}$ are submatrices of the corresponding $A_{ij}$, the compressed matrix $S$ is HBS and so can itself be compressed in the same way. Thus, move up one level in the tree, regroup the matrix blocks accordingly, and repeat.
 \end{remunerate}

 The result is a telescoping matrix representation of the form
 \begin{equation}
  A \approx D^{(\lambda)} + L^{(\lambda)} \left[ \cdots D^{(2)} + L^{(2)} \left( D^{(1)} + L^{(1)} D^{(0)} R^{(1)} \right) R^{(2)} \cdots \right] R^{(\lambda)},
  \label{eqn:multilevel-representation}
 \end{equation}
 cf.\ (\ref{eqn:one-level-representation}), where the superscript indexes the tree level $l = 0, \dots, \lambda$, that is accurate to relative precision approximately $\epsilon$. The algorithm is automatically adaptive in the sense that the compression is more efficient if lower precision is required \cite{gillman:2012:front-math-china,ho:2012:siam-j-sci-comput,martinsson:2005:j-comput-phys}.

 {\em Remark}. For the kernels in Table \ref{tab:examples}, which obey some form of Green's theorem (at least approximately), it is possible to substantially accelerate the preceding algorithm by using a ``proxy'' surface to capture all far-field interactions (see also section \ref{sec:complexity}). The key idea is that any such interaction can be represented in terms of some equivalent density on an appropriate local bounding surface, which can be chosen so that it requires only a constant number of points to discretize, irrespective of the actual number of points in the far field or their detailed structure. This observation hence replaces each global compression step with an entirely local one; see \cite{cheng:2005:siam-j-sci-comput,gillman:2012:front-math-china,greengard:2009:acta-numer,ho:2012:siam-j-sci-comput,martinsson:2005:j-comput-phys,ying:2004:j-comput-phys} for details.

 \subsection{Structured sparse embedding}
 \label{sec:sparse-embedding}
 For $M = N$, the decomposition (\ref{eqn:multilevel-representation}) enables a highly structured sparse representation \cite{chandrasekaran:2006a:siam-j-matrix-anal-appl,ho:2012:siam-j-sci-comput} of the linear system $Ax = b$ as
 \begin{equation}
  \left[
  \begin{array}{cccccccc}
   D^{(\lambda)} & L^{(\lambda)}\\
   R^{(\lambda)} & & -I\\
   & -I & D^{(\lambda - 1)} & \ddots\\
   & & \ddots & \ddots & -I\\
   & & & -I & D^{(1)} & L^{(1)}\\
   & & & & R^{(1)} & & -I\\
   & & & & & -I & D^{(0)}
  \end{array} \right] \left[
  \begin{array}{c}
   x^{(\lambda)}\\
   y^{(\lambda)}\\
   \vdots\\
   \vdots\\
   x^{(1)}\\
   y^{(1)}\\
   x^{(0)}
  \end{array} \right] = \left[
  \begin{array}{c}
   b\\
   0\\
   \vdots\\
   \vdots\\
   0\\
   0\\
   0
  \end{array} \right]
  \label{eqn:sparse-embedding}
 \end{equation}
 under the identifications
 \begin{subequations}
  \label{eqn:auxiliary-variables}
  \begin{eqnarray}
   x^{(\lambda)} = x, &\qquad x^{(l)} = R^{(l + 1)} x^{(l + 1)} \quad& \mbox{for $l = \lambda - 1, \dots, 0$},\\
   y^{(1)} = D^{(0)} x^{(0)}, &\qquad y^{(l + 1)} = D^{(l)} x^{(l)} + L^{(l)} y^{(l)} \quad& \mbox{for $l = 1, \dots, \lambda - 1$}.
  \end{eqnarray}
 \end{subequations}
 This expanded embedding clearly exposes the sparsity of HBS matrix equations and permits the immediate application of existing fast sparse solvers (such as UMFPACK \cite{davis:2004:acm-trans-math-softw} as in \cite{ho:2012:siam-j-sci-comput}).

 If $M \geq N$, however, then we have to deal with the overdetermined problem (\ref{eqn:overdetermined}), and (\ref{eqn:sparse-embedding}) must be interpreted somewhat more carefully. In particular, the identities (\ref{eqn:auxiliary-variables}) still hold, so only the first block row of (\ref{eqn:sparse-embedding}) is to be solved in the least squares sense. Thus, denoting the first block row of the sparse matrix in (\ref{eqn:sparse-embedding}) by $\mathbf{E}$ and the remainder (i.e., its last $2 \lambda$ block rows) by $\mathbf{C}$, and defining $\mathbf{x} = (x^{(\lambda)}, y^{(\lambda)}, \dots, x^{(1)}, y^{(1)}, x^{(0)})^{\trans}$, the analogue of (\ref{eqn:sparse-embedding}) for (\ref{eqn:overdetermined}) is the {\em equality-constrained} least squares problem
 \begin{equation}
  \min_{\mathbf{C} \mathbf{x} = \mathbf{0}} \| \mathbf{E} \mathbf{x} - b \|,
  \label{eqn:overdetermined-embedding}
 \end{equation}
 where both $\mathbf{E}$ and $\mathbf{C}$ are sparse. It is easy to see that $\mathbf{C}$ has full row rank.

 Similarly, if $M < N$ and we seek to solve the underdetermined system (\ref{eqn:underdetermined}), then the corresponding problem is
 \begin{equation}
  \min_{\mathbf{M} \mathbf{x} = \mathbf{b}_{1}} \left\| \mathbf{I}_{1} \mathbf{x} \right\|,
  \label{eqn:underdetermined-embedding}
 \end{equation}
 where
 \begin{equation}
  \mathbf{M} = \left[
  \begin{array}{c}
   \mathbf{E}\\
   \mathbf{C}
  \end{array} \right], \quad \mathbf{b}_{1} = \left[
  \begin{array}{c}
   b\\
   0\\
   \vdots\\
   0
  \end{array} \right], \quad \mathbf{I}_{1} = \left[
  \begin{array}{cccc}
   I & 0 & \cdots & 0
  \end{array} \right],
  \label{eqn:sparse-matrix}
 \end{equation}
 i.e., $\mathbf{M}$ is the entire sparse matrix on the left-hand side of (\ref{eqn:sparse-embedding}), which also has full row rank; $\mathbf{b}_{1}$ is the right-hand side of (\ref{eqn:sparse-embedding}); and $\mathbf{I}_{1}$ is an operator that picks out the first block row of the vector on which it acts.

 \section{Equality-constrained least squares}
 \label{sec:equality-least-squares}
 We now turn to the solution of linear least squares problems with linear equality constraints, with special attention to the case that both governing matrices are sparse as in (\ref{eqn:overdetermined-embedding}) and (\ref{eqn:underdetermined-embedding}). For consistency with the linear algebra literature, we adopt the notation of Barlow \etal \cite{barlow:1988:siam-j-numer-anal,barlow:1988:siam-j-sci-stat-comput,barlow:1992:siam-j-numer-anal}, which unfortunately conflicts somewhat with our previous definitions; the following notation is thus meant to pertain only to this section.

 Hence, consider the problem
 \begin{equation}
  \min_{Cx = g} \| Ex - f \|,
  \label{eqn:equality-least-squares}
 \end{equation}
 where $E \in \mathbb{C}^{m \times n}$ and $C \in \mathbb{C}^{p \times n}$, with
 $$
  \rank (C) = p, \quad \rank \left( \left[
  \begin{array}{c}
   E\\
   C
  \end{array} \right] \right) = n
 $$
 so that the solution is unique. Classical reduction schemes for solving (\ref{eqn:equality-least-squares}), such as the direct elimination and nullspace methods, require matrix products that can destroy the sparsity of the resulting reduced, unconstrained systems \cite{bjorck:1996:siam,lawson:1974:prentice-hall}.

 \subsection{Weighted least squares}
 An attractive alternative when both $E$ and $C$ are sparse is the method of weighting, which recasts (\ref{eqn:equality-least-squares}) in the unconstrained form
 \begin{equation}
  \min_{x} \| A(\tau) x - b(\tau) \|,
  \label{eqn:weighted-least-squares}
 \end{equation}
 where
 $$
  A(\tau) = \left[
  \begin{array}{c}
   E\\
   \tau C
  \end{array} \right], \quad b(\tau) = \left[
  \begin{array}{c}
   f\\
   \tau g
  \end{array} \right]
 $$
 for $\tau$ a suitably large weight. Clearly, as $\tau \to \infty$, the solution of (\ref{eqn:weighted-least-squares}) approaches that of (\ref{eqn:equality-least-squares}). The advantage, of course, is that (\ref{eqn:weighted-least-squares}) can be solved using standard sparse techniques; this point of view is elaborated in \cite{barlow:1988:siam-j-sci-stat-comput,van-loan:1985:siam-j-numer-anal}.

 However, the choice of an appropriate weight can be a delicate matter: if $\tau$ is too small, then (\ref{eqn:weighted-least-squares}) approximates (\ref{eqn:equality-least-squares}) poorly, while if $\tau$ is too large, then (\ref{eqn:weighted-least-squares}) can be ill-conditioned. An intuitive approach is to start with a small weight, then carry out some type of iterative refinement, effectively increasing the weight with each step. Such a scheme was first proposed by Van Loan \cite{van-loan:1985:siam-j-numer-anal}, then further studied and improved by Barlow \etal \cite{barlow:1988:siam-j-numer-anal,barlow:1992:siam-j-numer-anal}; we summarize their results in the next section.

 \subsection{Iterative reweighting by deferred correction}
 \label{sec:deferred-correction}
 In \cite{barlow:1992:siam-j-numer-anal}, Barlow and Vemulapati presented the following deferred correction procedure for the solution of the equality-constrained least squares problem (\ref{eqn:equality-least-squares}) via the successive solution of the weighted problem (\ref{eqn:weighted-least-squares}) with a {\em fixed} weight $\tau$:
 \begin{remunerate}
  \item
   Find
   $$
    x^{(0)} = \argmin_{x} \| A(\tau) x - b(\tau) \|
   $$
   and set
   $$
    r^{(0)} = f - E x^{(0)}, \quad w^{(0)} = g - C x^{(0)}, \quad \lambda^{(0)} = \tau^{2} w^{(0)}.
   $$
  \item
   For $k = 0, 1, 2, \dots$ until convergence, find
   $$
    \Delta x^{(k)} = \argmin_{x} \| A(\tau) x - b^{(k)} (\tau) \|, \quad b^{(k)} (\tau) = \left[
    \begin{array}{c}
     r^{(k)}\\
     \tau w^{(k)} + \tau^{-1} \lambda^{(k)}
    \end{array} \right]
   $$
   and update
   \begin{eqnarray*}
    x^{(k + 1)} &=& x^{(k)} + \Delta x^{(k)},\\
    r^{(k + 1)} &=& r^{(k)} - E \Delta x^{(k)},\\
    w^{(k + 1)} &=& w^{(k)} - C \Delta x^{(k)},\\
    \lambda^{(k + 1)} &=& \lambda^{(k)} + \tau^{2} w^{(k + 1)}.
   \end{eqnarray*}
   Terminate when the constraint residual $\| w^{(k + 1)} \|$ is small.
 \end{remunerate}

 Since $\tau$ is fixed, a single precomputed QR factorization of $A(\tau)$ can be used for all iterations. This algorithm is a slight modification of that employed by Van Loan \cite{van-loan:1985:siam-j-numer-anal} and has been shown to converge to the correct solution for $\tau$ appropriately chosen, provided that (\ref{eqn:equality-least-squares}) is not too ill-conditioned \cite{barlow:1988:siam-j-numer-anal,barlow:1992:siam-j-numer-anal,van-loan:1985:siam-j-numer-anal}. In particular, if implemented in double precision, then for $\tau = \epsilon_{\mach}^{-1/3} \sim 1.7 \times 10^{5}$, where $\epsilon_{\mach}$ is the machine epsilon, Barlow and Vemulapati \cite{barlow:1992:siam-j-numer-anal} showed that their algorithm requires no more than two iterations. Thus, for a broad class of problems for which it is reasonable to expect an accurate answer, the above scheme often converges extremely rapidly (and can, in some sense, even be considered a {\em direct} method, which can be made explicit by running the iteration for exactly two steps).

 {\em Remark}. Although Barlow and Vemulapati \cite{barlow:1992:siam-j-numer-anal} considered (\ref{eqn:equality-least-squares}) only over the reals, there is no inherent difficulty in extending their solution procedure to the complex case.

 {\em Remark}. It was recently pointed out to us by Eduardo Corona (personal communication, Aug.\ 2013) that deferred correction can be applied to ill-conditioned systems as well, provided that $\tau$ is changed appropriately. The relevant analysis can be found in \cite[Corollary 3.1]{barlow:1988:siam-j-numer-anal}, which suggests choosing $\tau = \epsilon_{\mach}^{-1/3} [\kappa (M)]^{1/3}$, where $\kappa (M) = \| M \| \| M^{+} \|$ is the condition number of the ``stacked'' matrix
 \begin{equation}
  M = \left[
  \begin{array}{c}
   E\\
   C
  \end{array} \right].
  \label{eqn:stacked-matrix}
 \end{equation}
 Of course, setting $\tau$ now requires an estimate of $\kappa (M)$, which may not always be available; for this reason, we have elected simply to present our algorithm with $\tau = \epsilon_{\mach}^{-1/3}$.

 \section{Algorithm}
 \label{sec:algorithm}
 We are now in a position to describe our fast semi-direct method for solving the over- and underdetermined systems (\ref{eqn:overdetermined}) and (\ref{eqn:underdetermined}), respectively, when $A$ is HBS. Let $\epsilon > 0$ be a specified numerical precision and set $\tau = \epsilon_{\mach}^{-1/3} \sim 1.7 \times 10^{5}$; we assume that all calculations are performed in double precision.

 \subsection{Overdetermined systems}
 Let $A \in \mathbb{C}^{M \times N}$ be HBS with $M \geq N$. Our algorithm proceeds in two phases. First, for the precomputation phase:
 \begin{remunerate}
  \item
   Compress $A$ to precision $\epsilon$ using recursive skeletonization \cite{ho:2012:siam-j-sci-comput}.
  \item
   Compute a sparse QR factorization of the weighted sparse matrix
   $$
    \mathbf{A} = \left[
    \begin{array}{c}
     \mathbf{E}\\
     \tau \mathbf{C}
    \end{array} \right],
   $$
   where $\mathbf{E}$ and $\mathbf{C}$ are as defined in section \ref{sec:sparse-embedding}.
 \end{remunerate}

 This is followed by the solution phase, which, for a given right-hand side $b$, produces an approximate solution (\ref{eqn:solution}) by using the precomputed QR factors to solve the equality-constrained least squares embedding (\ref{eqn:overdetermined-embedding}) via deferred correction \cite{barlow:1992:siam-j-numer-anal}. Clearly, for a fixed matrix $A$, only the solution phase must be performed for each additional right-hand side. Therefore, the cost of the precomputation phase is amortized over all such solves.

 {\em Remark}. The algorithm can also easily accommodate various modifications of the standard system (\ref{eqn:overdetermined}), e.g., the problem
 \begin{equation}
  \min_{x} \| Ax - b \|^{2} + \mu^{2} \| x \|^{2}
  \label{eqn:overdetermined-regularized}
 \end{equation}
 with Tikhonov regularization, which can be solved using
 $$
  \mathbf{A} = \left[
  \begin{array}{c}
   \mathbf{E}\\
   \mu \mathbf{I}_{1}\\
   \tau \mathbf{C}
  \end{array} \right], \quad \mathbf{b} = \left[
  \begin{array}{c}
   b\\
   0\\
   0
  \end{array} \right]
 $$
 in the weighted formulation (\ref{eqn:weighted-least-squares}).

 \subsection{Underdetermined systems}
 Now let $A \in \mathbb{C}^{M \times N}$ be HBS with $M < N$. Then (\ref{eqn:underdetermined}) can be solved using the same algorithm as above but with
 $$
  \mathbf{A} = \left[
  \begin{array}{c}
   \mathbf{I}_{1}\\
   \tau \mathbf{M}
  \end{array} \right], \quad \mathbf{b} = \left[
  \begin{array}{c}
   0\\
   \mathbf{b}_{1}
  \end{array} \right].
 $$

 \subsection{Error analysis}
 We now give an informal discussion of the accuracy of our method. Assume that $M \geq N$ and let $\kappa (A) = \| A \| \| A^{+} \| = \sigma_{1} (A) / \sigma_{N} (A)$ be the condition number of $A$, where $\sigma_{1} (A) \geq \cdots \geq \sigma_{N} (A) \geq \sigma_{N + 1} (A) = \cdots = \sigma_{M} (A) = 0$ are the singular values of $A$. We first estimate the condition number of the compressed matrix $A_{\epsilon}$ in (\ref{eqn:multilevel-representation}).

 \begin{proposition}
  Let $A_{\epsilon} = A + E$ with $\| E \| = \bigO (\epsilon \| A \|)$. If $\epsilon \kappa (A) \ll 1$, then $\kappa (A_{\epsilon}) = \bigO (\kappa (A))$.
 \end{proposition}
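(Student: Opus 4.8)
The plan is to treat this as a standard singular-value perturbation argument, relying on Weyl's inequality rather than on any special structure of the recursive skeletonization error. Since $\|A\| = \sigma_1(A)$, the hypothesis reads $\|E\| = \bigO(\epsilon \sigma_1(A))$. The key tool is the fact that singular values are $1$-Lipschitz with respect to the spectral norm, i.e., $|\sigma_i(A_\epsilon) - \sigma_i(A)| \leq \|E\|$ for every $i$; this is the natural analogue of Weyl's eigenvalue perturbation bound and follows from the min-max characterization of singular values.

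First I would bound the numerator. Applying the Lipschitz bound with $i = 1$ gives
$$
 \sigma_1(A_\epsilon) \leq \sigma_1(A) + \|E\| = \sigma_1(A)\left(1 + \bigO(\epsilon)\right),
$$
so the largest singular value is perturbed only by a relative factor of order $\epsilon$. Next I would bound the denominator, which is the crux of the argument. Taking $i = N$ (recall $A$ has full column rank, so $\sigma_N(A) > 0$) yields
$$
 \sigma_N(A_\epsilon) \geq \sigma_N(A) - \|E\| = \sigma_N(A)\left(1 - \bigO(\epsilon \kappa(A))\right),
$$
where I have used $\|E\| = \bigO(\epsilon \sigma_1(A))$ together with $\sigma_1(A)/\sigma_N(A) = \kappa(A)$. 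Here the hypothesis $\epsilon \kappa(A) \ll 1$ does the essential work: it guarantees that the factor $1 - \bigO(\epsilon\kappa(A))$ is bounded away from zero (say, at least $1/2$ once $\epsilon\kappa(A)$ is small enough), so that $A_\epsilon$ retains full column rank and $\sigma_N(A_\epsilon)$ stays comparable to $\sigma_N(A)$.

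Finally, dividing the two estimates gives
$$
 \kappa(A_\epsilon) = \frac{\sigma_1(A_\epsilon)}{\sigma_N(A_\epsilon)} \leq \kappa(A)\,\frac{1 + \bigO(\epsilon)}{1 - \bigO(\epsilon\kappa(A))} = \bigO(\kappa(A)),
$$
as claimed. The only real obstacle is the lower bound on $\sigma_N(A_\epsilon)$: without the smallness assumption $\epsilon\kappa(A) \ll 1$ the perturbation could in principle drive the smallest singular value to zero, making $\kappa(A_\epsilon)$ blow up (or even rendering it undefined). Everything else is a routine application of Weyl's inequality together with the observation that the relative perturbation of the top singular value is negligible.
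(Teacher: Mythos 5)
Your proposal is correct and follows essentially the same route as the paper: Weyl's inequality for singular values, a relative bound $\bigO(\epsilon)$ on the perturbation of $\sigma_1$ and $\bigO(\epsilon\kappa(A))$ on that of $\sigma_N$, and a quotient of the two. Your version is slightly more careful in spelling out that the hypothesis $\epsilon\kappa(A) \ll 1$ keeps the denominator bounded away from zero, a point the paper leaves implicit.
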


 \begin{proof}
  By Weyl's inequality,
  $$
   |\sigma_{i} (A_{\epsilon}) - \sigma_{i} (A)| \leq \| E \|, \quad i = 1, \dots, N,
  $$
  so $\sigma_{i} (A_{\epsilon}) = \sigma_{i} (A) + e_{i}$ with $|e_{i}| = \bigO (\epsilon \| A \|)$. Therefore,
  $$
   \kappa (A_{\epsilon}) = \frac{\sigma_{1} (A_{\epsilon})}{\sigma_{N} (A_{\epsilon})} = \frac{\sigma_{1} (A) + e_{1}}{\sigma_{N} (A) + e_{N}} = \kappa (A) \left[ \frac{1 + e_{1} / \sigma_{1} (A)}{1 + e_{N} / \sigma_{N} (A)} \right],
  $$
  where
  $$
   \frac{|e_{1}|}{\sigma_{1} (A)} = \frac{|e_{1}|}{\| A \|} = \bigO (\epsilon), \quad \frac{|e_{N}|}{\sigma_{N} (A)} = e_{N} \| A^{+} \| = \bigO (\epsilon \kappa (A)),
  $$
  so $\kappa (A_{\epsilon}) = \bigO (\kappa (A))$.
 \end{proof}

 In other words, if $A$ is not too ill-conditioned, then neither is $A_{\epsilon}$. But the convergence of deferred correction depends on the conditioning of the stacked matrix (\ref{eqn:stacked-matrix}), i.e., the sparse embedding $\mathbf{M}$ of $A_{\epsilon}$ in (\ref{eqn:sparse-matrix}). Although we have not explicitly studied the spectral properties of $\mathbf{M}$, numerical estimates suggest that $\kappa (\mathbf{M}) = \bigO (\kappa (A_{\epsilon}))$. Some evidence for this can be seen in the square, single-level case (\ref{eqn:one-level-representation}), for which the analogue of $\mathbf{M}$ is
 $$
  \left[
  \begin{array}{ccc}
   D & L\\
   R & & -I\\
   & -I & S
  \end{array} \right] = \left[
  \begin{array}{ccc}
   I & -LS & -L\\
   & I\\
   & & I
  \end{array} \right] \left[
  \begin{array}{ccc}
   A_{\epsilon}\\
   & & -I\\
   & -I & S
  \end{array} \right] \left[
  \begin{array}{ccc}
   I\\
   -SR & I\\
   -R & & I
  \end{array} \right],
 $$
 with inverse
 $$
  \left[
  \begin{array}{ccc}
   D & L\\
   R & & -I\\
   & -I & S
  \end{array} \right]^{-1} = \left[
  \begin{array}{ccc}
   I & LS & L\\
   & I\\
   & & I
  \end{array} \right] \left[
  \begin{array}{ccc}
   A_{\epsilon}^{-1}\\
   & S & -I\\
   & -I
  \end{array} \right] \left[
  \begin{array}{ccc}
   I\\
   SR & I\\
   R & & I
  \end{array} \right].
 $$
 Assume without loss of generality that $\| A \| = 1$. Since $S$ is a submatrix of $A$, this also implies that $\| S \| \leq 1$. Furthermore, it is typically the case that $\| L \|$ and $\| R \|$ are not too large since they come from the ID \cite{cheng:2005:siam-j-sci-comput}. It then follows that $\| \mathbf{M} \| = \bigO (\| A_{\epsilon} \|)$ and $\| \mathbf{M}^{-1} \| = \bigO (\| A_{\epsilon}^{-1} \|)$; thus, $\kappa (\mathbf{M}) = \bigO (\kappa (A_{\epsilon})) = \bigO (\kappa (A))$.

 This argument can be extended to the multilevel setting (\ref{eqn:sparse-embedding}), but only for square matrices. Still, in practice, we observed that the claim seems to hold also for rectangular matrices, so that if $\kappa (A)$ is not too large then neither is $\kappa (\mathbf{M})$ and we may expect deferred correction to succeed.

 Even if the iteration converges to the exact solution, however, there is still an error arising from the use of $A_{\epsilon}$ in place of $A$ itself. The extent of this error is governed by standard perturbation theory. The following is a restatement of Theorem 20.1 in \cite{higham:2002:siam}, originally due to Wedin, specialized to the current setting.

 \begin{theorem}[Wedin]
  Let $A \in \mathbb{C}^{M \times N}$ be full-rank with $M \geq N$, and let
  $$
   x = \argmin_{y} \| Ay - b \|, \quad x_{\epsilon} = \argmin_{y} \left\| A_{\epsilon} y - b \right\|
  $$
  be the solutions of the corresponding overdetermined systems, with residuals
  $$
   r = b - Ax, \quad r_{\epsilon} = b - A_{\epsilon} x_{\epsilon},
  $$
  respectively. If $\epsilon \kappa (A) < 1$, then
  $$
   \frac{\left\| x - x_{\epsilon} \right\|}{\| x \|} \leq \frac{\epsilon \kappa (A)}{1 - \epsilon \kappa (A)} \left[ 1 + \kappa (A) \frac{\| r \|}{\| A \| \| x \|} \right], \quad \frac{\left\| r - r_{\epsilon} \right\|}{\| b \|} \leq 2 \epsilon \kappa (A).
  $$
 \end{theorem}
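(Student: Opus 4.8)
The plan is to treat this as a standard first-order perturbation estimate for the full-rank least squares problem, following the pseudoinverse and range-projector calculus that underlies Wedin's theorem. Since the statement is exactly the specialization of Theorem~20.1 in \cite{higham:2002:siam} to the case in which only $A$ is perturbed (the right-hand side $b$ is held fixed), the most economical route is to invoke that theorem directly with $\Delta b = 0$: the two terms appearing in the solution bound, and the absence of the usual additive $\epsilon$ contributions, are precisely what one obtains when $b$ is unperturbed. Below I sketch how I would instead derive the two inequalities from scratch, which also makes transparent where each factor comes from.

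First I would establish well-posedness and the crucial denominator $1 - \epsilon \kappa (A)$. Writing $A_{\epsilon} = A + E$ with $\| E \| = \bigO (\epsilon \| A \|)$, so that the relative perturbation is $\| E \| / \| A \| = \bigO (\epsilon)$, Weyl's inequality (as already used in the preceding Proposition) gives $\sigma_{N} (A_{\epsilon}) \geq \sigma_{N} (A) - \| E \| \geq \sigma_{N} (A) (1 - \epsilon \kappa (A))$, which is positive under the hypothesis $\epsilon \kappa (A) < 1$. Hence $A_{\epsilon}$ retains full column rank, $x_{\epsilon} = A_{\epsilon}^{+} b$ is well-defined, and
$$ \| A_{\epsilon}^{+} \| = \frac{1}{\sigma_{N} (A_{\epsilon})} \leq \frac{\| A^{+} \|}{1 - \epsilon \kappa (A)}. $$
This single estimate supplies the $(1 - \epsilon \kappa (A))^{-1}$ prefactor. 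For the solution bound I would then work from the normal equations $A^{*} r = 0$ and $A_{\epsilon}^{*} (A_{\epsilon} x_{\epsilon} - b) = 0$. Setting $\delta x = x_{\epsilon} - x$ and substituting $A_{\epsilon} = A + E$, the orthogonality $A^{*} r = 0$ annihilates the leading part of the $E^{*} r$ contribution, yielding $A_{\epsilon}^{*} A_{\epsilon}\, \delta x = E^{*} r - A^{*} E x$ up to the second-order term $E^{*} E x$. Multiplying by $(A_{\epsilon}^{*} A_{\epsilon})^{-1}$ splits $\delta x$ into a \emph{direct} part of size $\| A_{\epsilon}^{+} \| \| E \| \| x \|$ and a \emph{residual-driven} part of size $\| A_{\epsilon}^{+} \|^{2} \| E \| \| r \|$. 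Dividing by $\| x \|$ and rewriting $\| A^{+} \| \| E \| = \kappa (A)\, \| E \| / \| A \|$ turns these into $\epsilon \kappa (A)$ and $\epsilon \kappa (A)^{2}\, \| r \| / (\| A \| \| x \|)$, which is exactly the bracketed expression.

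For the residual bound I would express the two residuals through the orthogonal projectors onto the ranges, $r = (I - A A^{+}) b$ and $r_{\epsilon} = (I - A_{\epsilon} A_{\epsilon}^{+}) b$, so that $r - r_{\epsilon} = (A_{\epsilon} A_{\epsilon}^{+} - A A^{+}) b$, and then invoke a perturbation estimate for the projector onto $\range (A)$ of the form $\| A_{\epsilon} A_{\epsilon}^{+} - A A^{+} \| = \bigO (\| A_{\epsilon}^{+} \| \| E \|) = \bigO (\epsilon \kappa (A))$. The clean constant $2 \epsilon \kappa (A)$ then follows from the sharp form of this projector-perturbation estimate together with the working bound $\epsilon \kappa (A) < 1$.

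I expect the main obstacle to be pinning down the exact constants rather than the overall structure. Controlling the second-order term $E^{*} E x$ and replacing $(A_{\epsilon}^{*} A_{\epsilon})^{-1}$ by $(A^{*} A)^{-1}$ without loosening the estimate both rely on the sharp bound for $\| A_{\epsilon}^{+} \|$ above, and the factor of $2$ in the residual inequality hinges on having a correspondingly sharp range-projector perturbation bound. Because all of this machinery is already packaged in \cite{higham:2002:siam}, the pragmatic choice in the paper is simply to cite it; the derivation outlined here is the route I would take were a self-contained argument required.
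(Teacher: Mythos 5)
Your primary route---invoking Theorem 20.1 of Higham with $\Delta b = 0$---is exactly what the paper does: it states the result as a restatement of that theorem, originally due to Wedin, and offers no independent proof. Your supplementary from-scratch sketch (Weyl's inequality for $\sigma_{N}(A_{\epsilon})$, the normal-equations splitting of $\delta x$ into direct and residual-driven parts, and the range-projector perturbation for the residual bound) is consistent with the standard derivation and correctly identifies where each factor arises, so the proposal matches the paper's approach.
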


 A somewhat simpler bound holds for underdetermined systems \cite[Theorem 21.1]{higham:2002:siam}.
 \begin{theorem}[Demmel and Higham]
 Let $A \in \mathbb{C}^{M \times N}$ be full-rank with $M < N$, and let $x$ and $x_{\epsilon}$ be the minimum-norm solutions to the underdetermined systems $Ax = b$ and $A_{\epsilon} x_{\epsilon} = b$, respectively, for $b \neq 0$. If $\| A^{+} (A - A_{\epsilon}) \| < 1$, then
  $$
   \frac{\left\| x - x_{\epsilon} \right\|}{\| x \|} \leq 2 \epsilon \kappa (A).
  $$
 \end{theorem}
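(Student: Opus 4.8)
The plan is to work directly with the min-norm solutions $x = A^{+} b$ and $x_{\epsilon} = A_{\epsilon}^{+} b$, exploiting that full row rank gives $A A^{+} = A_{\epsilon} A_{\epsilon}^{+} = I$ and that $x \in \range (A^{*})$, $x_{\epsilon} \in \range (A_{\epsilon}^{*})$. Writing $\delta A = A_{\epsilon} - A$, I would first note $A_{\epsilon} x = A x + \delta A x = b + \delta A x$, so that $A_{\epsilon} (x - x_{\epsilon}) = \delta A x$. Applying $A_{\epsilon}^{+}$ and using $A_{\epsilon}^{+} A_{\epsilon} x_{\epsilon} = x_{\epsilon}$ yields the decomposition
$$
 x - x_{\epsilon} = (I - P_{\epsilon}) x + A_{\epsilon}^{+} \delta A x, \qquad P_{\epsilon} = A_{\epsilon}^{+} A_{\epsilon}.
$$
Since $P_{\epsilon}$ is the orthogonal projector onto $\range (A_{\epsilon}^{*})$, the first term lies in $\mathrm{null} (A_{\epsilon})$ and the second in $\range (A_{\epsilon}^{*})$, so the two are orthogonal and $\| x - x_{\epsilon} \|^{2} = \| (I - P_{\epsilon}) x \|^{2} + \| A_{\epsilon}^{+} \delta A x \|^{2}$.

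The range term is immediate: $\| A_{\epsilon}^{+} \delta A x \| \leq \| A_{\epsilon}^{+} \| \| \delta A \| \| x \|$, and since the hypothesis $\| A^{+} (A - A_{\epsilon}) \| < 1$ preserves the rank of $A$ and gives $\sigma_{\min} (A_{\epsilon}) \geq \sigma_{\min} (A) - \| \delta A \|$ by Weyl's inequality, one has $\| A_{\epsilon}^{+} \| = \bigO (\| A^{+} \|)$. The delicate term is $\| (I - P_{\epsilon}) x \|$, which measures the tilt of $\range (A^{*})$ relative to $\range (A_{\epsilon}^{*})$; the naive attempt to relate it back to $x - x_{\epsilon}$ is circular. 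The key trick I would use is a min-norm lifting: because $x \in \range (A^{*})$, the vector $u = (A^{+})^{*} x$ satisfies $A^{*} u = P x = x$ (using that $A^{+} A$ is Hermitian) with $\| u \| \leq \| A^{+} \| \| x \|$. Then $A_{\epsilon}^{*} u = A^{*} u + \delta A^{*} u = x + \delta A^{*} u$ lies in $\range (A_{\epsilon}^{*})$, and since $\| (I - P_{\epsilon}) x \|$ is exactly the distance from $x$ to that subspace,
$$
 \| (I - P_{\epsilon}) x \| \leq \| x - A_{\epsilon}^{*} u \| = \| \delta A^{*} u \| \leq \| \delta A \| \| A^{+} \| \| x \|.
$$

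Combining the two bounds through the Pythagorean identity and using $\| \delta A \| = \bigO (\epsilon \| A \|)$ gives $\| x - x_{\epsilon} \| \lesssim \sqrt{2} \, \epsilon \kappa (A) \| x \|$, comfortably within the stated factor of $2$ (the slack absorbs the $(1 - \| A^{+} \| \| \delta A \|)^{-1}$ from $\| A_{\epsilon}^{+} \|$). I expect the main obstacle to be precisely the control of the null-space term $(I - P_{\epsilon}) x$: the genuine difficulty is that it encodes the first-order perturbation of the row space itself, and the whole argument hinges on recognizing that the explicit near point $A_{\epsilon}^{*} u \in \range (A_{\epsilon}^{*})$ converts this subspace-angle quantity into a clean bound without appealing to a separate $\sin\theta$ theorem. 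Everything else is routine perturbation bookkeeping.
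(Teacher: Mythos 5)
Your proof is essentially correct, and it is worth noting at the outset that the paper itself offers no proof of this statement: it is presented as a restatement of Theorem 21.1 of Higham's book (due to Demmel and Higham), so you are supplying an argument where the authors only cite. The argument you give is in fact the standard one for min-norm perturbation bounds: the orthogonal splitting $x - x_{\epsilon} = (I - P_{\epsilon})x + A_{\epsilon}^{+}\,\delta A\, x$ with $P_{\epsilon} = A_{\epsilon}^{+}A_{\epsilon}$, and the lifting $u = (A^{+})^{*}x$ with $A^{*}u = x$ to bound the null-space (subspace-tilt) term by $\|\delta A\|\,\|A^{+}\|\,\|x\|$, are exactly the right moves, and the orthogonality of the two components is correctly justified.

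Two points of bookkeeping deserve attention. First, your justification that the rank is preserved and that $\|A_{\epsilon}^{+}\| = \bigO(\|A^{+}\|)$ leans on Weyl's inequality, but the hypothesis $\|A^{+}\delta A\| < 1$ does \emph{not} imply $\|\delta A\| < \sigma_{\min}(A)$, so $\sigma_{\min}(A) - \|\delta A\|$ could be negative and Weyl gives nothing. The clean route is the factorization $A_{\epsilon} = A(I + A^{+}\delta A)$ (valid because $AA^{+} = I$ for full row rank), which simultaneously shows $A_{\epsilon}$ has full row rank and gives $\sigma_{\min}(A_{\epsilon}) \geq \sigma_{\min}(A)\bigl(1 - \|A^{+}\delta A\|\bigr)$, hence $\|A_{\epsilon}^{+}\| \leq \|A^{+}\|/(1 - \|A^{+}\delta A\|)$. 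Second, the constant: combining your two bounds via Pythagoras yields
$$
 \|x - x_{\epsilon}\| \leq \epsilon\,\kappa(A)\,\|x\|\,\sqrt{1 + \bigl(1 - \|A^{+}\delta A\|\bigr)^{-2}},
$$
which exceeds $2\epsilon\kappa(A)\|x\|$ once $\|A^{+}\delta A\|$ is larger than about $0.42$; under the bare hypothesis $\|A^{+}\delta A\| < 1$ the factor of $2$ is therefore not ``comfortably'' achieved but only holds to first order (the cited theorem indeed carries an $\bigO(\epsilon^{2})$ remainder, and the leading constant $2$ arises from adding the two $\epsilon\kappa(A)$ contributions by the triangle inequality, which your Pythagorean combination sharpens to $\sqrt{2}$). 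Neither issue affects the substance of the argument, which is sound.
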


 A more rigorous analysis is not yet available, but we note that our numerical results (section \ref{sec:results}) indicate that the algorithm is accurate and stable.

 \section{Complexity analysis}
 \label{sec:complexity}
 In this section, we analyze the complexity of our solver for a representative example: the HBS matrix $A$ defined by a kernel from Table \ref{tab:examples}, acting on source and target data distributed uniformly over the same $d$-dimensional domain (but at different densities). We follow the approach of \cite{ho:2012:siam-j-sci-comput}.

 Sort both sets of data together in one hyperoctree (the multidimensional generalization of an octree, cf.\ \cite{samet:1984:acm-comput-surv}) as outlined in section \ref{sec:hierarchical}, subdividing each node until it contains no more than a set number of combined row and column indices, i.e., $|I_{i}| + |J_{i}| = \bigO (1)$ for each leaf $i$. For each tree level $l = 0, \dots, \lambda$, let $p_{l}$ denote the number of matrix blocks; $m_{l}$ and $n_{l}$, the row and column block sizes, respectively, in the compressed representation (\ref{eqn:multilevel-representation}), assumed equal across all blocks for simplicity; and $k_{l}$, the skeleton block size, which is clearly of the same order for both rows and columns as it depends only on $\min \{ \bigO (m_{l}), \bigO (n_{l}) \}$ (this can be made precise by explicitly considering proxy compression, which produces interaction matrices of size $\bigO (m_{l}) \times \bigO (n_{l})$). Note that $m_{l}$ and $n_{l}$ are {\em not} the row and column block sizes in the tree; they are the result of hierarchically ``pulling up'' skeletons during the compression process; see (iv) below. Moreover, since $M \neq N$ in general, we can define an additional level parameter $\lambda' \leq \lambda$ corresponding to the depth of the tree constructed via the same process on only the smaller of the source or target data, e.g., on only the source data if $M \geq N$. For the remainder of this discussion, we assume that $M \geq N$. Analogous results can be recovered for $M < N$ simply by switching the roles of $M$ and $N$ in what follows. We start with some useful observations:
 \begin{romannum}
  \item
   By construction, $p_{\lambda} (m_{\lambda} + n_{\lambda}) = M + N \sim M$, where $m_{\lambda}, n_{\lambda} = \bigO (1)$, so $p_{\lambda} \sim M$. Similarly, $p_{\lambda'} \sim N$.
  \item
   Each subdivision increases the number of blocks by a factor of roughly $2^{d}$, so $p_{l + 1} \sim 2^{d} p_{l}$. In particular, $p_{0} = 1$, so $\lambda \sim (1/d) \log M$ and $\lambda' \sim (1/d) \log N$.
  \item
   For $l = \lambda' + 1, \dots, \lambda$, $k_{l} = \bigO (1)$ since $\min \{ \bigO (m_{l}), \bigO (n_{l}) \} = \bigO (1)$, while for $l = 0, \dots, \lambda'$, it can be shown \cite{ho:2012:siam-j-sci-comput} that
   \begin{equation}
    k_{l} \sim \left\{
    \begin{array}{ll}
     l & \mbox{if $d = 1$},\\
     2^{(d - 1)l} & \mbox{if $d > 1$}.
    \end{array} \right.
    \label{eqn:rank-growth}
   \end{equation}
  \item
   The total number of row and column indices at level $l < \lambda$ is equal to the total number of skeletons at level $l + 1$, i.e., $p_{l} m_{l} = p_{l} n_{l} = p_{l + 1} k_{l + 1}$, so $m_{l}, n_{l} \sim k_{l + 1}$.
 \end{romannum}

 \subsection{Matrix compression}
 From \cite{cheng:2005:siam-j-sci-comput,liberty:2007:proc-natl-acad-sci-usa,woolfe:2008:appl-comput-harmon-anal}, the cost of computing a rank-$k$ ID of an $m \times n$ matrix is $\bigO (kmn)$. If proxy compression is used, then $m \sim m_{l}$ for a block at level $l$. Therefore, the total cost of matrix compression is
 $$
  T_{\cm} \sim \sum_{l = 0}^{\lambda} p_{l} k_{l} m_{l} n_{l} \sim \sum_{l = 0}^{\lambda} p_{l} k_{l}^{3}.
 $$
 We break this into two sums, one over $l = \lambda' + 1, \dots, \lambda$ and another over $l = 0, \dots, \lambda'$, with estimates
 $$
  \sum_{l = \lambda' + 1}^{\lambda} p_{l} k_{l}^{3} \sim M, \qquad \sum_{l = 0}^{\lambda'} p_{l} k_{l}^{3} \sim \left\{
  \begin{array}{ll}
   N & \mbox{if $d = 1$,}\\
   N^{3(1 - 1/d)} & \mbox{if $d > 1$,}
  \end{array} \right.
 $$
 respectively. Hence,
 \begin{equation}
  T_{\cm} \sim \left\{
  \begin{array}{ll}
   M + N & \mbox{if $d = 1$},\\
   M + N^{3(1 - 1/d)} & \mbox{if $d > 1$}.
  \end{array} \right.
  \label{eqn:complexity-cm}
 \end{equation}

 \subsection{Compressed QR factorization}
 We consider the QR decomposition of a block tridiagonal matrix with the same sparsity structure as that of $\mathbf{M}$ in (\ref{eqn:sparse-embedding}), computed using Householder reflections. This clearly encompasses the factorization of $\mathbf{A} = \mathbf{Q} \mathbf{R}$ for both over- and underdetermined systems. We begin by studying the square $M = N$ case, for which it is immediate that $\mathbf{R}$ is block upper bidiagonal (Fig.\ \ref{fig:qr-square}), so the cost of Householder triangularization is
 \begin{equation}
  T_{\qr} \sim \sum_{l = 0}^{\lambda} p_{l} m_{l} n_{l}^{2} \sim \sum_{l = 0}^{\lambda} p_{l} k_{l}^{3} \sim \left\{
  \begin{array}{ll}
   M + N & \mbox{if $d = 1$},\\
   M + N^{3(1 - 1/d)} & \mbox{if $d > 1$},
  \end{array} \right.
  \label{eqn:complexity-qr}
 \end{equation}
 following the structure of $\mathbf{R}$.
 \begin{figure}
  \centering
  \includegraphics{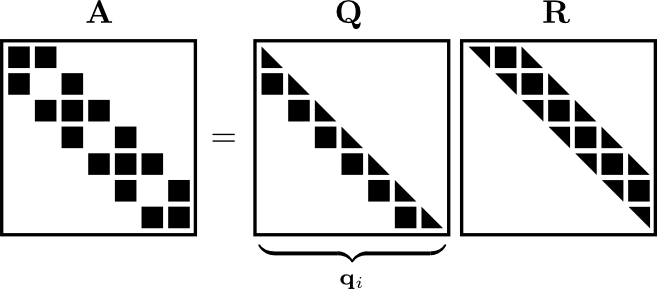}
  \caption{Sparsity structure of QR factors for the structured embedding (\ref{eqn:sparse-embedding}) with $M = N$, where the orthogonal matrix $\mathbf{Q}$ is given in terms of the elementary Householder reflectors $\mathbf{q}_{i}$.}
  \label{fig:qr-square}
 \end{figure}
 The $M > N$ and $M < N$ cases are easily analyzed by noting that only the blocks at level $\lambda$ are rectangular (in the asymptotic sense), so any nonzero propagation during triangularization is limited and both essentially reduce to the square case above (Fig. \ref{fig:qr-rectangular}).
 \begin{figure}
  \centering
  \includegraphics{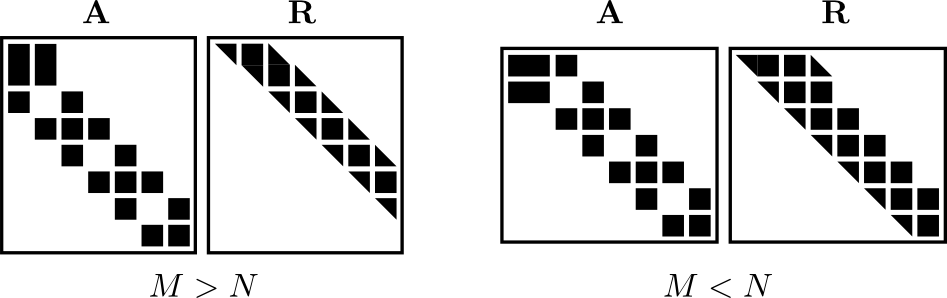}
  \caption{Sparsity structure of the triangular QR factor $\mathbf{R}$ for the structured embedding (\ref{eqn:sparse-embedding}) in the $M > N$ (left) and $M < N$ (right) cases.}
  \label{fig:qr-rectangular}
 \end{figure}

 The complexities (\ref{eqn:complexity-cm}) and (\ref{eqn:complexity-qr}) of compression and factorization, respectively, together constitute the cost of the precomputation phase.

 \subsection{Compressed pseudoinverse application}
 We now examine the cost of applying the pseudoinverse to solve the weighted least squares problem (\ref{eqn:weighted-least-squares}) using the precomputed QR factors. For this, we suppose that the solution is determined via the equation $\mathbf{R} \mathbf{x} = \mathbf{Q}^{*} \mathbf{b}$, which requires one application of $\mathbf{Q}^{*}$, assumed to be performed using elementary Householder transformations, and one backsolve with $\mathbf{R}$, whose cost is clearly on the same order as multiplying by $\mathbf{R}$. Then from the arguments above, it is evident that both operations have complexity
 $$
  T_{\sv} \sim \sum_{l = 0}^{\lambda} p_{l} m_{l} n_{l} \sim \sum_{l = 0}^{\lambda} p_{l} k_{l}^{2} \sim \left\{
  \begin{array}{ll}
   M + N & \mbox{if $d = 1$},\\
   M + N \log N & \mbox{if $d = 2$},\\
   M + N^{2(1 - 1/d)} & \mbox{if $d > 2$}.
  \end{array} \right.
 $$
 Since the total number of such problems to be solved is constant for each outer problem (\ref{eqn:overdetermined-embedding}), assuming fast convergence (section \ref{sec:deferred-correction}), this is also the complexity of the solution phase. As with classical matrix factorizations, the prefactor for $T_{\sv}$ is typically far smaller than that for $T_{\cm}$ or $T_{\qr}$.

 {\em Remark}. One can also use the seminormal equations $\mathbf{R}^{*} \mathbf{R} x = \mathbf{A}^{*} \mathbf{b}$, which do not require the orthogonal matrix $\mathbf{Q}$ \cite{bjorck:1996:siam}. However, one step of iterative refinement is necessary for stability, so the total cost is three applications of $\mathbf{A}$ or $\mathbf{A}^{*}$ (one each for the original and refinement solves, plus another to compute the residual), and four solves with $\mathbf{R}$ or $\mathbf{R}^{*}$. In practice, we found this approach to be slower than that involving $\mathbf{Q}$ by a factor of about four.

 \subsection{Some remarks}
 For all complexities above, the constants implicit in the estimates are of the form $\bigO (2^{d} \log^{\alpha} \epsilon)$ for modest $\alpha$, i.e., they are exponential in the dimension and polylogarithmic in the precision \cite{greengard:1987:j-comput-phys,greengard:1997:acta-numer}.

 In the special case that the source and target data are separated, $k_{l} = \bigO (1)$ for all $l$, so $T_{\cm}$, $T_{\qr}$, and $T_{\sv}$ all have optimal complexity $\bigO (M + N)$ in {\em any} dimension. This describes, for example, the fitting of atomic partial charges to reproduce electrostatic potential values on ``shells'' around a molecule \cite{bayly:1993:j-phys-chem,francl:1996:j-comput-chem}, the computation of equivalent densities in the kernel-independent FMM \cite{ying:2004:j-comput-phys}, and even the calculation of the ID in recursive skeletonization \cite{gillman:2012:front-math-china,ho:2012:siam-j-sci-comput,martinsson:2005:j-comput-phys}, which requires a least squares solve \cite{cheng:2005:siam-j-sci-comput}.

 \section{Updating and downdating}
 \label{sec:updating}
 We now discuss an important feature of our solver: its capacity for efficient updating and downdating in response to dynamically changing data.  Our methods are based on the augmented system approach of \cite{greengard:2009:acta-numer}, extended to the least squares setting, and exploit the ability to rapidly apply $A^{+}$ via the solution phase of our algorithm, which we hereafter take as a computational primitive. Thus, suppose that we are given some base linear system (\ref{eqn:overdetermined}), focusing for simplicity on the overdetermined case, for which we have precomputed a compressed QR factorization of $A$. We consider the addition and deletion of both rows and columns, corresponding to the modification of observations and regression variables, respectively. Furthermore, we assume that such modifications are small, in particular so that the system remains overdetermined, and accommodate each case within the framework of the general equality-constrained least squares problem
 \begin{equation}
  \min_{\mathbf{C} \mathbf{x} = \mathbf{g}} \| \mathbf{E} \mathbf{x} - \mathbf{f} \|.
  \label{eqn:equality-least-squares-augmented}
 \end{equation}

 \subsection{Adding and deleting rows}
 To add $p_{\row}$ rows to the matrix $A = (a_{ij})$, and correspondingly to the vector $b = (b_{1}, \dots, b_{M})^{\trans}$, in (\ref{eqn:overdetermined}), we simply use (\ref{eqn:equality-least-squares-augmented}) with
 \begin{equation}
  \mathbf{E} = \left[
  \begin{array}{c}
   A\phantom{_{+}}\\
   C^{\row}_{+}
  \end{array} \right], \quad \mathbf{C} = 0, \quad \mathbf{x} = x, \quad \mathbf{f} = \left[
  \begin{array}{c}
   b\phantom{_{+}}\\
   b_{+}
  \end{array} \right], \quad \mathbf{g} = 0,
  \label{eqn:row-addition}
 \end{equation}
 where $C^{\row}_{+} \in \mathbb{C}^{p_{\row} \times N}$ describes the influence of the variables $x$ on the new data $b_{+}$. To delete $q_{\row}$ rows with indices $k_{1}, \dots, k_{q_{\row}}$, we add $q_{\row}$ degrees of freedom to those rows to be deleted in order to enforce strict agreement with those observations as follows:
 $$
  \mathbf{E} = \left[
  \begin{array}{cc}
   A & B^{\row}_{-}
  \end{array} \right], \quad \mathbf{C} = \left[
  \begin{array}{cc}
   C^{\row}_{-} & I
  \end{array} \right], \quad \mathbf{x} = \left[
  \begin{array}{c}
   x\phantom{_{-}}\\
   x^{\row}_{-}
  \end{array} \right], \quad \mathbf{f} = b, \quad \mathbf{g} = d,
 $$
 where $B^{\row}_{-} = (\delta_{k_{i} j}) \in \mathbb{C}^{M \times q_{\row}}$, $C^{\row}_{-} = (a_{k_{i} j}) \in \mathbb{C}^{q_{\row} \times N}$, and $d = (b_{k_{1}}, \dots, b_{k_{q_{\row}}})^{\trans}$; here,
 $$
  \delta_{ij} = \left\{
  \begin{array}{ll}
   1 & \mbox{if $i = j$},\\
   0 & \mbox{if $i \neq j$}
  \end{array} \right.
 $$
 is the Kronecker delta. The simultaneous addition and deletion of rows can be achieved via a straightforward combination of the above:
 $$
  \mathbf{E} = \left[
  \begin{array}{cc}
   A\phantom{_{+}} & B^{\row}_{-}\\
   C^{\row}_{+}
  \end{array} \right], \quad \mathbf{C} = \left[
  \begin{array}{cc}
   C^{\row}_{-} & I
  \end{array} \right], \quad \mathbf{x} = \left[
  \begin{array}{c}
   x\phantom{_{-}}\\
   x^{\row}_{-}
  \end{array} \right], \quad \mathbf{f} = \left[
  \begin{array}{c}
   b\phantom{_{+}}\\
   b_{+}
  \end{array} \right], \quad \mathbf{g} = d.
 $$

 \subsection{Adding and deleting columns}
 To add $p_{\col}$ columns to $A$ and hence to the vector $x = (x_{1}, \dots, x_{N})^{\trans}$, we let
 $$
  \mathbf{E} = \left[
  \begin{array}{cc}
   A & B^{\col}_{+}
  \end{array} \right], \quad \mathbf{C} = 0, \quad \mathbf{x} = \left[
  \begin{array}{c}
   x\phantom{_{+}}\\
   x^{\col}_{+}
  \end{array} \right], \quad \mathbf{f} = b, \quad \mathbf{g} = 0,
 $$
 where $B^{\col}_{+} \in \mathbb{C}^{M \times p_{\col}}$ describes the influence of the new variables $x^{\col}_{+}$ on the data. To delete $q_{\col}$ columns with indices $l_{1}, \dots, l_{q_{\col}}$, we add $q_{\col}$ ``anti-variables'' annihilating their effects:
 $$
  \mathbf{E} = \left[
  \begin{array}{cc}
   A & B^{\col}_{-}
  \end{array} \right], \quad \mathbf{C} = \left[
  \begin{array}{cc}
   C^{\col}_{-} & I
  \end{array} \right], \quad \mathbf{x} = \left[
  \begin{array}{c}
   x\phantom{_{-}}\\
   x^{\col}_{-}
  \end{array} \right], \quad \mathbf{f} = b, \quad \mathbf{g} = 0,
 $$
 where $B^{\col}_{-} = (a_{i l_{j}}) \in \mathbb{C}^{M \times q_{\col}}$ and $C^{\col}_{-} = (\delta_{i l_{j}}) \in \mathbb{C}^{q_{\col} \times N}$. Finally, to add and delete columns simultaneously, we take
 $$
  \mathbf{E} = \left[
  \begin{array}{ccc}
   A & B^{\col}_{+} & B^{\col}_{-}
  \end{array} \right], \quad \mathbf{C} = \left[
  \begin{array}{ccc}
   C^{\col}_{-} & 0 & I
  \end{array} \right], \quad \mathbf{x} = \left[
  \begin{array}{c}
   x\phantom{_{+}}\\
   x^{\col}_{+}\\
   x^{\col}_{-}
  \end{array} \right], \quad \mathbf{f} = b, \quad \mathbf{g} = 0.
 $$

 \subsection{Simultaneous modification of rows and columns}
 The general case of modifying both rows and columns can be treated using (\ref{eqn:equality-least-squares-augmented}) with
 $$
  \mathbf{E} = \left[
  \begin{array}{cccc}
   A\phantom{_{+}} & B^{\col}_{+} & B^{\row}_{-} & B^{\col}_{-}\\
   C^{\row}_{+} & D_{+} & 0 & D_{-}
  \end{array} \right], \quad \mathbf{C} = \left[
  \begin{array}{cccc}
   C^{\row}_{-} & 0 & I & 0\\
   C^{\col}_{-} & 0 & 0 & I
  \end{array} \right]
 $$
 and
 $$
  \mathbf{x} = \left[
  \begin{array}{c}
   x\phantom{_{+}}\\
   x^{\col}_{+}\\
   x^{\row}_{-}\\
   x^{\col}_{-}
  \end{array} \right], \quad \mathbf{f} = \left[
  \begin{array}{c}
   b\phantom{_{+}}\\
   b_{+}
  \end{array} \right], \quad \mathbf{g} = \left[
  \begin{array}{c}
   d\\
   0
  \end{array} \right],
 $$
 where $D_{+} \in \mathbb{C}^{p_{\row} \times p_{\col}}$ describes the influence of $x^{\col}_{+}$ on $b_{+}$, $D_{-} = (c_{i l_{j}}) \in \mathbb{C}^{p_{\row} \times q_{\col}}$ for $C^{\row}_{+} = (c_{ij})$ accounts for the effect of column deletion on $b_{+}$ (alternatively, one can zero out the relevant columns in $C^{\row}_{+}$), and all other quantities are as defined previously.

 \subsection{Solution methods}
 The augmented system (\ref{eqn:equality-least-squares-augmented}) can be solved by deferred correction \cite{barlow:1992:siam-j-numer-anal}, where the matrix to be considered at each step is
 $$
  \mathbf{A} = \left[
  \begin{array}{c}
   \mathbf{E}\\
   \tau \mathbf{C}
  \end{array} \right] \equiv \left[
  \begin{array}{cc}
   A & B\\
   C & D
  \end{array} \right] \in \mathbb{C}^{(M + m) \times (N + n)}
 $$
 for $B \in \mathbb{C}^{M \times n}$, $C \in \mathbb{C}^{m \times N}$, and $D \in \mathbb{C}^{m \times n}$, where $m = p_{\row} + q_{\row} + q_{\col}$ and $n = p_{\col} + q_{\row} + q_{\col}$. If $m$ and $n$ are small, then an efficient approach is to compute $\mathbf{A}^{+}$ by using $A^{+}$ in various block pseudoinverse formulas (see, e.g., \cite{burns:1974:siam-j-appl-math}) or by invoking Greville's method \cite{campbell:1979:pitman,greville:1960:siam-rev}, which can construct $\mathbf{A}^{+}$ via a sequence of rank-one updates to $A^{+}$. Alternatively, one can appeal to iterative methods like GMRES \cite{saad:1986:siam-j-sci-stat-comput}, using $A^{+}$ as a preconditioner. In this approach, instead of solving
 $$
  \min_{\mathbf{x}} \| \mathbf{A} \mathbf{x} - \mathbf{b} \|, \quad \mathbf{b} = \left[
  \begin{array}{c}
   \mathbf{f}\\
   \tau \mathbf{g}
  \end{array} \right],
 $$
 we consider instead, say, the left preconditioned system
 $$
  \min_{\mathbf{x}} \| \mathbf{B} \mathbf{A} \mathbf{x} - \mathbf{B} \mathbf{b} \|
 $$
 for an appropriate choice of the preconditioner $\mathbf{B} \in \mathbb{C}^{(N + n) \times (M + m)}$. Hayami, Yin, and Ito \cite{hayami:2010:siam-j-matrix-anal-appl} showed that GMRES converges provided that $\range (\mathbf{A}) = \range (\mathbf{B}^{*})$ and $\range (\mathbf{A}^{*}) = \range (\mathbf{B})$. Therefore, suitable choices of $\mathbf{B}$ include, e.g.,
 $$
  \left[
  \begin{array}{cc}
   A^{+} & C^{*}\\
   B^{*} & D^{*}
  \end{array} \right], \quad \left[
  \begin{array}{cc}
   A^{+} & C^{*}\\
   B^{*} & D^{+}
  \end{array} \right]
 $$
 (the latter if $D$ is not rank-deficient), for which only one application of $A^{+}$ is required per iteration. Such methods also have the possible advantage of being more flexible and robust. If the total number of iterations is small, the cost of updating is therefore only $\bigO (T_{\sv})$ instead of $\bigO (T_{\cm} + T_{\qr})$ for computing the QR factorization anew, which is typically much larger (section \ref{sec:complexity}).

 {\em Remark}. If $M + m > N + n$, then it is more efficient to solve the left preconditioned system of dimension $N + n$. Similarly, if $M + m < N + n$, then it is more efficient to solve the right preconditioned system of dimension $M + m$.

 \section{Numerical results}
 \label{sec:results}
 In this section, we report some numerical results for our fast semi-direct solver, compared against LAPACK/ATLAS \cite{anderson:1999:siam,whaley:2001:parallel-comput} and an accelerated GMRES solver \cite{hayami:2010:siam-j-matrix-anal-appl,saad:1986:siam-j-sci-stat-comput} using an FMM-type scheme. We considered problems in both 2D and 3D. All matrices were block partitioned using quadtrees in 2D and octrees in 3D, uniformly subdivided until all leaf nodes contained no more than a fixed number of combined rows and columns (cf.\ sections \ref{sec:matrix-compression} and \ref{sec:complexity}), while adaptively pruning all empty nodes during the refinement process. The recursive skeletonization algorithm was implemented in Fortran and employed as described in \cite{ho:2012:siam-j-sci-comput}. Sparse QR factorizations were computed using SuiteSparseQR \cite{davis:2011:acm-trans-math-softw} through a \MATLAB\ R2012b (The MathWorks, Inc.: Natick, MA) interface, keeping all orthogonal matrices in compact Householder form. The deferred correction procedure \cite{barlow:1992:siam-j-numer-anal} was implemented in \MATLAB. All calculations were performed in double-precision real arithmetic on a single 3.10 GHz processor with 4 GB of RAM.

 For each case, where appropriate, we report the following data:
 \begin{itemize}
  \item
   $M$, $N$: the uncompressed row and column dimensions, respectively;
  \item
   $K_{\row}$, $K_{\col}$: the final row and column skeleton dimensions, respectively;
  \item
   $T_{\cm}$: the matrix compression time (s);
  \item
   $T_{\qr}$: the sparse QR factorization time (s);
  \item
   $T_{\sv}$: the pseudoinverse application time (s);
  \item
   $n_{\iter}$: the number of iterations required for deferred correction;
  \item
   $E$: the relative error $\left\| x - x_{\epsilon} \right\| / \| x \|$ with respect to the solution $x$ produced by LAPACK/ATLAS (if the problem is small enough) or FMM/GMRES; and
  \item
   $R$: the relative residual $\left\| A x_{\epsilon} - b \right\| / \| b \|$ with respect to the true operator.
 \end{itemize}
 Note that $E$ and $R$ are not the true relative error and residual but nevertheless provide a useful measure of accuracy.

 \subsection{Laplace's equation}
 For benchmarking purposes, we first applied our method to Laplace's equation
 $$
  \Delta u = 0 \quad \mbox{in $\Omega \in \mathbb{R}^{d}$}, \qquad u = f \quad \mbox{on $\partial \Omega$}
 $$
 in a simply connected interior domain with Dirichlet boundary conditions, which can be solved by writing the solution in the form of a double-layer potential
 $$
  u(\vec{x}) = \int_{\partial \Omega} \frac{\partial G}{\partial \nu_{\vec{y}}} (\| \vec{x} - \vec{y} \|) \: \sigma (\vec{y}) \: d \vec{y} \quad \mbox{for $\vec{x} \in \Omega$},
 $$
 where
 $$
  G(r) = \left\{
  \begin{array}{ll}
   -1 / (2 \pi) \log r & \mbox{if $d = 2$},\\
   1 / (4 \pi r) & \mbox{if $d = 3$}
  \end{array} \right.
 $$
 is the free-space Green's function, $\nu_{\vec{y}}$ is the unit outer normal at $\vec{y} \in \partial \Omega$, and $\sigma$ is an unknown surface density. Letting $\vec{x}$ approach the boundary, standard results from potential theory \cite{guenther:1988:prentice-hall} yield the second-kind Fredholm boundary integral equation
 \begin{equation}
  -\frac{1}{2} \sigma (\vec{x}) + \int_{\partial \Omega} \frac{\partial G}{\partial \nu_{\vec{y}}} (\| \vec{x} - \vec{y} \|) \: \sigma (\vec{y}) \: d \vec{y} = f(\vec{x})
  \label{eqn:integral-equation}
 \end{equation}
 for $\sigma$, assuming that $\partial \Omega$ is smooth. This is not a least squares problem, but it allows us to compare the performance of the sparse QR approach with our previous sparse LU results \cite{ho:2012:siam-j-sci-comput}. Of course, since the system (\ref{eqn:integral-equation}) is square, the corresponding sparse embedding (\ref{eqn:sparse-embedding}) can be solved without iteration.

 In 2D, we took as the problem geometry a $2$:$1$ ellipse, discretized via the trapezoidal rule, while in 3D, we used the unit sphere, discretized as a collection of flat triangles with piecewise constant densities. We also compared our algorithm against an FMM-accelerated GMRES solver driven by the open-source FMMLIB software package \cite{gimbutas:in-prep}, which is a fairly efficient implementation (but not optimized using the plane wave representations of \cite{greengard:1997:acta-numer}). Timing results for each case are shown in Fig.\ \ref{fig:solve-lap}, with detailed data for the recursive skeletonization scheme given in Tables \ref{tab:solve-lap2d} and \ref{tab:solve-lap3d}. The precision was set to $\epsilon = 10^{-9}$ in 2D and $10^{-6}$ in 3D.
 \begin{figure}
  \begin{center}
   \includegraphics{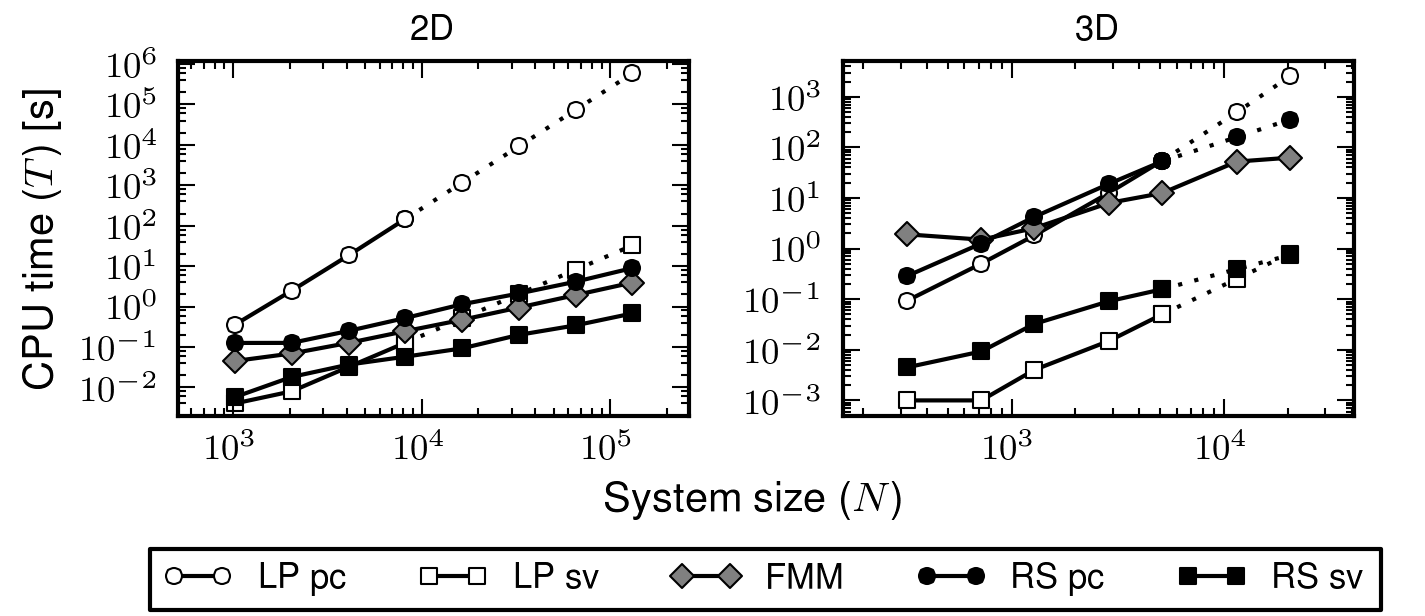}
  \end{center}
  \caption{CPU times for solving Laplace's equation in 2D and 3D using LAPACK/ATLAS (LP), FMM/GMRES (FMM), and recursive skeletonization (RS) as a function of the system size $N$. For LP and RS, the computation is split into two parts: precomputation (pc), for LP consisting of matrix formation and factorization, and for RS of matrix compression and factorization; and system solution (sv), consisting of matrix (pseudo-) inverse application via precomputed QR factors. The precision of FMM and RS was set at $\epsilon = 10^{-9}$ in 2D and $10^{-6}$ in 3D. Dotted lines indicate extrapolated data; for RS in 3D, only factorization and inversion (as executed through \MATLAB) are extrapolated.}
  \label{fig:solve-lap}
 \end{figure}

 It is evident that our method scales as predicted, with precomputation and solution complexities of $\bigO (N)$ in 2D ($d = 1$), and $\bigO (N^{3/2})$ and $\bigO (N \log N)$, respectively, in 3D ($d = 2$).
 \begin{table}
  \caption{Numerical results for solving Laplace's equation in 2D at precision $\epsilon = 10^{-9}$.}
  \label{tab:solve-lap2d}
  \begin{center}
   {\footnotesize
    \begin{tabular}{r|rrcccc}
     \hline
     \multicolumn{1}{c|}{$N$} & \multicolumn{1}{c}{$K_{\row}$} & \multicolumn{1}{c}{$K_{\col}$} & $T_{\cm}$ & $T_{\qr}$ & $T_{\sv}$ & $E$\\
     \hline
       1024 & 30 & 30 & 3.1\E$-2$ & 9.4\E$-2$ & 5.8\E$-3$ & 1.1\E$-9$\\
       2048 & 29 & 30 & 6.5\E$-2$ & 5.8\E$-2$ & 1.8\E$-2$ & 4.5\E$-9$\\
       4096 & 30 & 30 & 1.3\E$-1$ & 1.2\E$-1$ & 3.7\E$-2$ & 1.5\E$-8$\\
       8192 & 30 & 31 & 2.6\E$-1$ & 2.6\E$-1$ & 5.7\E$-2$ & 1.4\E$-8$\\
      16384 & 31 & 31 & 5.2\E$-1$ & 6.0\E$-1$ & 9.2\E$-2$ & 1.7\E$-8$\\
      32768 & 30 & 30 & 1.1\E$+0$ & 1.0\E$+0$ & 2.0\E$-1$ & 1.2\E$-8$\\
      65536 & 30 & 30 & 2.1\E$+0$ & 2.0\E$+0$ & 3.4\E$-1$ & 1.6\E$-8$\\
     131072 & 29 & 29 & 4.1\E$+0$ & 4.7\E$+0$ & 6.8\E$-1$ & 2.2\E$-8$\\
     \hline
    \end{tabular}
   }
  \end{center}
 \end{table}
 In 2D, both phases are very fast, easily beating the uncompressed LAPACK/ATLAS solver in both time ($\bigO (N^{3})$ and $\bigO (N^{2})$ for precomputation and solution, respectively) and memory, and coming quite close to the FMM/GMRES solver as well.
 \begin{table}
  \caption{Numerical results for solving Laplace's equation 3D at precision $\epsilon = 10^{-6}$. Parentheses denote extrapolated values.}
  \label{tab:solve-lap3d}
  \begin{center}
   {\footnotesize
    \begin{tabular}{r|rrcccc}
     \hline
     \multicolumn{1}{c|}{$N$} & \multicolumn{1}{c}{$K_{\row}$} & \multicolumn{1}{c}{$K_{\col}$} & $T_{\cm}$ & $T_{\qr}$ & $T_{\sv}$ & $E$\\
     \hline
       320 &  320 &  320 & 2.3\E$-1$ &  5.1\E$-2$  &  4.5\E$-3$ & 1.5\E$-10$\\
       720 &  628 &  669 & 1.1\E$+0$ &  1.6\E$-1$  &  9.3\E$-3$ & 5.1\E$-07$\\
      1280 &  890 &  913 & 3.7\E$+0$ &  5.0\E$-1$  &  3.2\E$-2$ & 1.0\E$-06$\\
      2880 & 1393 & 1400 & 1.7\E$+1$ &  1.9\E$+0$  &  9.1\E$-2$ & 1.2\E$-06$\\
      5120 & 1886 & 1850 & 4.7\E$+1$ &  6.0\E$+0$  &  1.6\E$-1$ & 2.2\E$-06$\\
     11520 & 2750 & 2754 & 1.4\E$+2$ & (1.9\E$+1$) & (3.9\E$-1$)\\
     20480 & 3592 & 3551 & 3.1\E$+2$ & (4.6\E$+1$) & (7.4\E$-1$)\\
     \hline
    \end{tabular}
   }
  \end{center}
 \end{table}
 The same is essentially true in 3D over the range of problem sizes tested, though it should be emphasized that FMM/GMRES has optimal $\bigO (N)$ complexity and so should prevail asymptotically. However, as observed previously \cite{greengard:2009:acta-numer,ho:2012:siam-j-sci-comput,martinsson:2005:j-comput-phys}, the solve time using recursive skeletonization following precomputation (comprising one application of $\mathbf{Q}^{*}$ and one backsolve with $\mathbf{R}$) is much faster than an individual FMM/GMRES solve: e.g., in 2D at $N = 131072$, $T_{\FMM} = 3.9$ s, while $T_{\sv} = 0.7$ s. This is significantly slower when compared with our UMFPACK-based sparse LU solver ($T_{\sv} \sim 0.1$ s) \cite{ho:2012:siam-j-sci-comput}. The difference may be due, in part, both to a higher constant inherent in the QR approach and to the overhead from interfacing with \MATLAB. Unfortunately, we were unable to perform the sparse QR factorizations in-core for the 3D case beyond $N \sim 10^{4}$; the corresponding data are extrapolated from the results of section \ref{sec:complexity}.

 \subsection{Least squares fitting of thin plate splines}
 \label{sec:thin-plate-spline}
 We next turned to an overdetermined problem involving 2D function interpolation using thin plate splines; see (\ref{eqn:thin-plate-spline}). More specifically, we sought to compute the coefficients $a_{j}$ of the interpolant
 $$
  g(\vec{x}) = \sum_{j = 1}^{N} a_{j} \phi_{\TPS} (\| \vec{x} - \vec{c}_{j} \|),
 $$
 that best matches a given function
 $$
  f(x, y) = \sin (4 \pi x) + \cos (2 \pi y) \sin (3 \pi xy) \quad \mbox{for $\vec{x} \equiv (x, y)$}
 $$
 in the least squares sense on some set of randomly chosen targets $\vec{x}_{i} \in [0, 1]^{2}$ for $i = 1, \dots, M$. The points $\vec{c}_{j}$ for $j = 1, \dots, N$ denote the centers of the splines and lie on a uniform tensor product grid on $[0, 1]^{2}$. This is an {\em inconsistent} linear system. Since the problem is somewhat ill-conditioned, we also add Tikhonov regularization with regularization parameter $\mu = 0.1$ as indicated in (\ref{eqn:overdetermined-regularized}).

 Timing results for various $M$ and $N$ at a fixed ratio of $M/N = 4$ with $\epsilon = 10^{-6}$ are shown in Fig.\ \ref{fig:overdetermined}, with detailed data in Table \ref{tab:overdetermined}.
 \begin{figure}
  \begin{center}
   \includegraphics{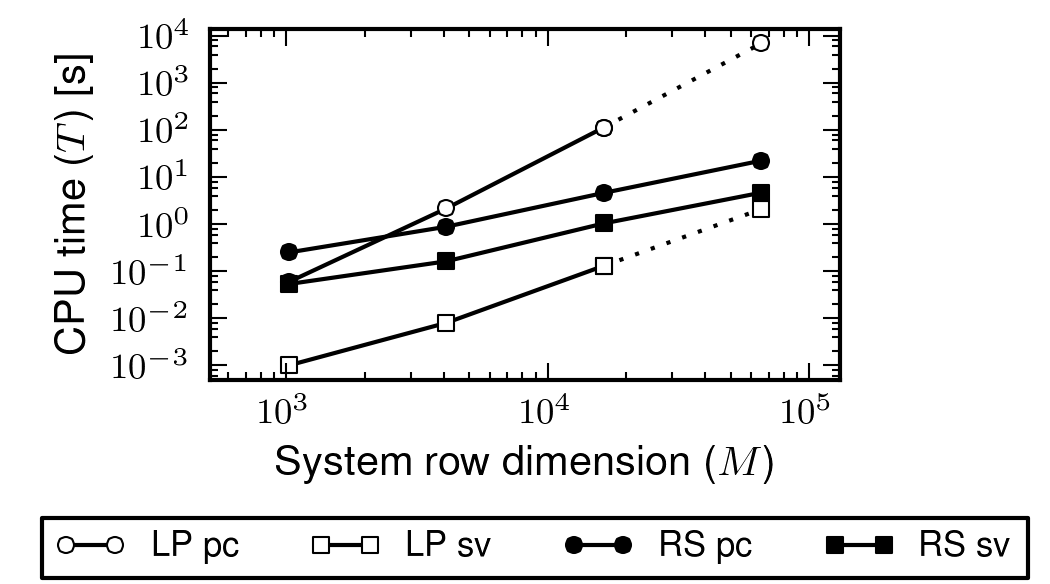}
  \end{center}
  \caption{CPU times for overdetermined thin plate spline fitting in 2D at precision $\epsilon = 10^{-6}$ using LAPACK/ATLAS, FMM/GMRES, and recursive skeletonization as a function of the system row dimension $M$, with the column dimension fixed proportionally at $N = M/4$; all other notation as in Fig.\ \ref{fig:solve-lap}.}
  \label{fig:overdetermined}
 \end{figure}
 The results are in line with our complexity estimates of $\bigO (M + N^{3/2})$ and $\bigO (M + N \log N)$ for precomputation and solution, respectively.
 \begin{table}
  \caption{Numerical results for overdetermined thin plate spline fitting in 2D at precision $\epsilon = 10^{-6}$.}
  \label{tab:overdetermined}
  \begin{center}
   {\footnotesize
    \begin{tabular}{rr|rrcccccc}
     \hline
     \multicolumn{1}{c}{$M$} & \multicolumn{1}{c|}{$N$} & \multicolumn{1}{c}{$K_{\row}$} & \multicolumn{1}{c}{$K_{\col}$} & $T_{\cm}$ & $T_{\qr}$ & $T_{\sv}$ & $n_{\iter}$ & $E$ & $R$\\
     \hline
      1024 &   256 & 174 & 148 & 7.7\E$-2$ & 1.7\E$-1$ & 5.3\E$-2$ & 1 & 4.1\E$-5$ & 1.4\E$-1$\\
      4096 &  1024 & 260 & 247 & 5.7\E$-1$ & 3.1\E$-1$ & 1.6\E$-1$ & 1 & 8.3\E$-5$ & 4.4\E$-2$\\
     16384 &  4096 & 399 & 391 & 3.1\E$+0$ & 1.5\E$+0$ & 1.0\E$+0$ & 1 & 3.9\E$-4$ & 1.6\E$-2$\\
     65536 & 16384 & 564 & 574 & 1.5\E$+1$ & 7.0\E$+0$ & 4.7\E$+0$ & 1 & 2.0\E$-6$ & 6.7\E$-3$\\
     \hline
    \end{tabular}
   }
  \end{center}
 \end{table}
 This compares favorably with the uncompressed complexities of $\bigO (M N^{2})$ and $\bigO (MN)$, respectively, for LAPACK/ATLAS. We also tested an iterative GMRES solver, which required from $33$ up to $130$ iterations on the largest problem considered using $A^{\trans}$ as a left preconditioner. Direct timings are unavailable since we did not have an FMM to apply the thin-plate spline kernel (or its transpose). The results are instead estimated using recursive skeletonization and an established benchmark FMM rate of about $10^{5}$ points per second in 2D. It is immediate that our fast solver outperforms FMM/GMRES due to the rapidly growing iteration count. Note also the convergence in the relative residual of roughly second order. In all cases, the deferred correction procedure converged with just one iteration.

 \subsection{Underdetermined charge fitting}
 We then considered an underdetermined problem: seeking a minimum-norm charge distribution in 2D. The setup is as follows. Let $\vec{x}_{j}$ for $j = 1, \dots, N$ be uniformly spaced points on the unit circle, each associated with a random charge $q_{j}$. We measure their induced field
 $$
  f(\vec{x}; q) = -\frac{1}{2 \pi} \sum_{j = 1}^{N} q_{j} \log \left\| \vec{x} - \vec{x}_{j} \right\|
 $$
 on a uniformly sampled outer ring of radius $1 + \delta$, and compute an equivalent set of charges $\tilde{q}_{j}$ with minimal Euclidean norm reproducing those measurements. Here, we set $\delta = 10^{-4}$ and sampled at $M = N/8$ observation points over a range of $N$.

 Timing results at precision $\epsilon = 10^{-9}$ are shown in Fig.\ \ref{fig:underdetermined}, with detailed data given in Table \ref{tab:underdetermined}.
 \begin{figure}
  \begin{center}
   \includegraphics{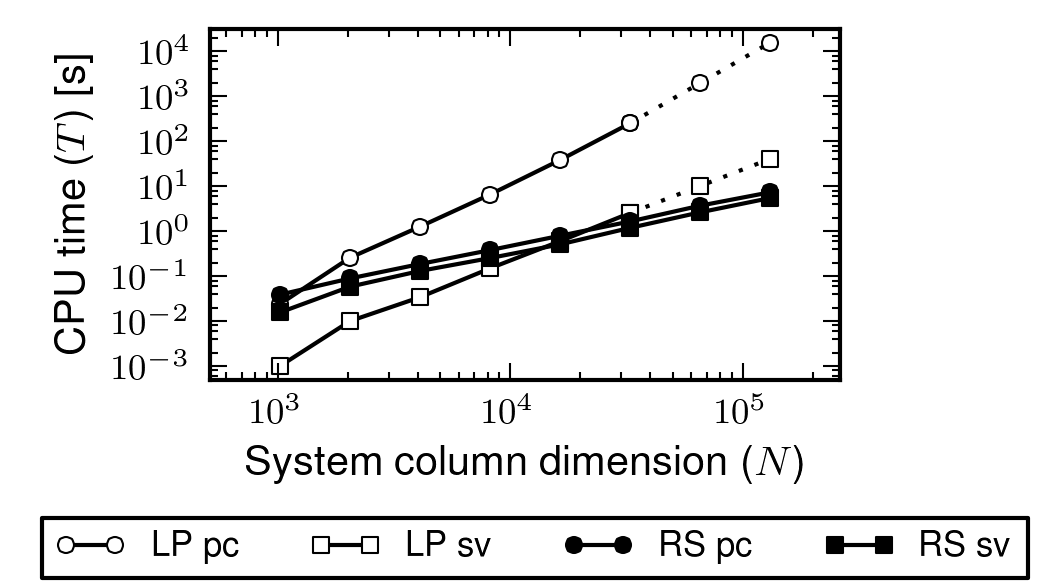}
  \end{center}
  \caption{CPU times for underdetermined charge fitting in 2D at precision $\epsilon = 10^{-9}$ using LAPACK/ATLAS, FMM/GMRES, and recursive skeletonization as a function of the system column dimension $N$, with the row dimension fixed proportionally at $M = N/8$; all other notation as in Fig.\ \ref{fig:solve-lap}.}
  \label{fig:underdetermined}
 \end{figure}
 Since the source and target points are separated (by an annular region of width $\delta$), our algorithm has optimal $\bigO (M + N)$ complexity, which is readily observed.
 \begin{table}
  \caption{Numerical results for underdetermined charge fitting in 2D at precision $\epsilon = 10^{-9}$.}
  \label{tab:underdetermined}
  \begin{center}
   {\footnotesize
    \begin{tabular}{rr|rrcccccc}
     \hline
     \multicolumn{1}{c}{$M$} & \multicolumn{1}{c|}{$N$} & \multicolumn{1}{c}{$K_{\row}$} & \multicolumn{1}{c}{$K_{\col}$} & $T_{\cm}$ & $T_{\qr}$ & $T_{\sv}$ & $n_{\iter}$ & $E$ & $R$\\
     \hline
       128 &   1024 &  69 &  72 & 1.7\E$-2$ & 2.1\E$-2$ & 1.6\E$-2$ & 1 & 1.6\E$-9$ & 1.6\E$-15$\\
       256 &   2048 &  80 &  80 & 4.4\E$-2$ & 4.3\E$-2$ & 5.8\E$-2$ & 2 & 1.3\E$-8$ & 1.1\E$-15$\\
       512 &   4096 &  89 &  90 & 9.6\E$-2$ & 8.7\E$-2$ & 1.3\E$-1$ & 2 & 6.1\E$-8$ & 1.9\E$-15$\\
      1024 &   8192 &  99 & 100 & 2.0\E$-1$ & 1.8\E$-1$ & 2.5\E$-1$ & 2 & 5.5\E$-8$ & 3.0\E$-15$\\
      2048 &  16384 & 108 & 110 & 4.0\E$-1$ & 3.8\E$-1$ & 5.1\E$-1$ & 2 & 3.6\E$-8$ & 1.8\E$-15$\\
      4096 &  32768 & 119 & 119 & 8.1\E$-1$ & 8.2\E$-1$ & 1.2\E$+0$ & 2 & 3.5\E$-8$ & 2.1\E$-15$\\
      8192 &  65536 & 128 & 131 & 1.6\E$+0$ & 2.0\E$+0$ & 2.6\E$+0$ & 2 & 4.8\E$-9$ & 7.1\E$-09$\\
     16384 & 131072 & 134 & 138 & 3.3\E$+0$ & 4.0\E$+0$ & 5.5\E$+0$ & 2 & 6.6\E$-9$ & 7.5\E$-09$\\
     \hline
    \end{tabular}
   }
  \end{center}
 \end{table}
 Furthermore, as we have solved an approximate, compressed system, we cannot in general fit the data exactly (with respect to the true operator). Indeed, we see relative residuals of order $\bigO (\epsilon)$ as predicted by the compression tolerance. Thus, our algorithm is especially suitable in the event that observations need to be matched only to a specified precision. Our semi-direct method vastly outperformed both LAPACK/ATLAS and FMM/GMRES, which required from $42$ up to $880$ iterations using $A^{\trans}$ as a right preconditioner. Deferred correction was successful in all cases within two steps.

 \subsection{Thin plate splines with updating}
 In our final example, we demonstrate the efficiency of our updating and downdating methods in the typical setting of fitting additional observations to an already specified overdetermined system. For this, we employed the thin plate spline approximation problem of section \ref{sec:thin-plate-spline} with $M = 16384$ and $N = 4096$, followed by the addition of $50$ new random target points. From section \ref{sec:updating}, the perturbed system can be written as (\ref{eqn:equality-least-squares-augmented}) with (\ref{eqn:row-addition}), i.e.,
 $$
  \mathbf{E} \mathbf{x} \simeq \mathbf{f} \quad \mbox{where $\mathbf{E} = \left[
  \begin{array}{c}
   A\phantom{_{+}}\\
   C^{\row}_{+}
  \end{array} \right]$}.
 $$

 We used GMRES with the left preconditioner $\mathbf{B} = [A^{+}, (C^{\row}_{+})^{*}]$, which, since $A$ has full column rank, gives $\mathbf{B} \mathbf{E} = I + (C^{\row}_{+})^{*} C^{\row}_{+}$, hence the preconditioned system is
 \begin{equation}
  \left[ I + (C^{\row}_{+})^{*} C^{\row}_{+} \right] \mathbf{x} \simeq \mathbf{B} \mathbf{f}.
  \label{eqn:update-setup}
 \end{equation}
 Note that only {\em one} application of $A^{+}$ is necessary, independent of the number of iterations required. Solving this in \MATLAB\ took $17$ iterations and a total of $1.9$ s, with $1.6$ s going towards setting up (\ref{eqn:update-setup}). The relative residual on the new data was $7.7 \times 10^{-3}$. This should be compared with the roughly $6$ s required to solve the problem without updating, treating it instead as a new system via our compressed algorithm (Table \ref{tab:overdetermined}). Although this difference is perhaps not very dramatic, it is worth emphasizing that the complexity here scales as $\bigO (M + N \log N)$ with updating versus $\bigO (M + N^{3/2})$ without, as the former needs only to apply $A^{+}$ while the latter needs also to compress and factor $\mathbf{E}$. Therefore, the asymptotics for updating are much improved.

 \section{Generalizations and conclusions}
 \label{sec:conclusion}
 In this paper, we have presented a fast semi-direct algorithm for over- and underdetermined least squares problems involving HBS matrices, and exhibited its efficiency and practical performance in a variety of situations including RBF interpolation and dynamic updating. In 1D (including boundary problems in 2D and problems with separated data in all dimensions), the solver achieves optimal $\bigO (M + N)$ complexity and is extremely fast, but it falters somewhat in higher dimensions, due primarily to the growing ranks of the compressed matrices as expressed by (\ref{eqn:rank-growth}). Developments for addressing this growth are now underway for square linear systems \cite{corona:2013:arxiv,ho:2013:arxiv}, and we expect these ideas to carry over to the present setting. Significantly, the term involving the larger matrix dimension is linear in all complexities (i.e., only $\bigO (M)$ instead of $\bigO (M N^{2})$ as for classical direct methods), which makes our algorithm ideally suited to large, rectangular systems where both $M$ and $N$ increase with refinement.

 {\em Remark}. If only {\em one} dimension is large so that the matrix is strongly rectangular, then standard methods are usually sufficient; see also \cite{meng:2014:siam-j-sci-comput,rokhlin:2008:proc-natl-acad-sci-usa,tygert:2009:arxiv}.

 Although we have not explicitly considered least squares problems with HBS equality constraints (we have only done so implicitly through our treatment of underdetermined systems), it is evident that our methods generalize. However, our complexity estimates can depend on the structure of the system matrix. In particular, if it is sparse, e.g., a diagonal weighting matrix, then our estimates are preserved. We can also, in principle, handle HBS least squares problems with HBS constraints simply by expanding out both matrices in sparse form.

 This flexibility is one of our method's main advantages, though it can also create some difficulties. In particular, the fundamental problem is no longer the unconstrained least squares system (\ref{eqn:overdetermined}) but the more complicated equality-constrained system (\ref{eqn:equality-least-squares}). Accordingly, more sophisticated iterative techniques \cite{barlow:1988:siam-j-numer-anal,barlow:1992:siam-j-numer-anal,van-loan:1985:siam-j-numer-anal} are used, but these can fail if the problem is too ill-conditioned. This is perhaps the greatest drawback of the proposed scheme. Still, our numerical results suggest that the algorithm remains effective for moderately ill-conditioned problems that are already quite challenging for standard iterative solvers. For severely ill-conditioned problems, other methods may be preferred.

 Finally, it is worth noting that fast direct solvers can be leveraged for other least squares techniques as well. This is straightforward for the normal equations, which are subject to well-known conditioning issues, and for the somewhat better behaved augmented system version \cite{arioli:1989:numer-math,bjorck:1996:siam}:
 $$
  \left[
  \begin{array}{cc}
   I & A\\
   A^{*}
  \end{array} \right] \left[
  \begin{array}{c}
   r\\
   x
  \end{array} \right] = \left[
  \begin{array}{c}
   b\\
   0
  \end{array} \right].
 $$
 This approach has the advantage of being immediately amenable to fast inversion techniques but at the cost of ``squaring'' and enlarging the system. Thus, all complexity estimates involve $M + N$ instead of $M$ and $N$ separately. In particular, the current generation of fast direct solvers would require, e.g., $\bigO ((M + N)^{3(1 - 1/d)})$ instead of $\bigO (M + N^{3(1 - 1/d)})$ work. With the development of a next generation of linear or nearly linear time solvers \cite{corona:2013:arxiv,hackbusch:2002:computing,ho:2013:arxiv}, this distinction may become less critical. Memory usage and high-performance computing hardware issues will also play important roles in determining which methods are most competitive. We expect these issues to become settled in the near future.

 \section*{Acknowledgements}
 We would like to thank the anonymous referees for their careful reading and insightful remarks, which have improved the paper tremendously.

\end{document}